\newtheorem{theorem}{Theorem}[section]
\newtheorem{corollary}[theorem]{Corollary}
\newtheorem{lemma}[theorem]{Lemma}
\theoremstyle{definition}
\newtheorem{definition}[theorem]{Definition}
\newtheorem{example}[theorem]{Example}
\newtheorem{remark}[theorem]{Remark}
\numberwithin{equation}{section}
\title[Mirror Descent-Type Methods with Weighting Scheme for VIs] {Mirror Descent Methods with Weighting Scheme for Outputs for Constrained Variational Inequality Problems}
\author[M.~S.~Alkousa]{Mohammad S. Alkousa}
\address[M.~S.~Alkousa]{Innopolis University, and Moscow Institute of Physics and Technology, Russia.}
\email{\tt m.alkousa@innopolis.ru}
\author[B.~A.~Alashqar]{Belal A. Alashqar}
\address[B.~A.~Alashqar]{Moscow Institute of Physics and Technology, Russia.}
\email{\tt alashkar.ba@phystech.edu}
\author[F.~S.~Stonyakin]{Fedor~S.~Stonyakin}
\address[F.~S.~Stonyakin]{Moscow Institute of Physics and Technology, and V. I. Vernadsky Crimean Federal University, Russia.}
\email{{\tt fedyor@mail.ru}}
\author[T.~Nabhani]{Tarek Nabhani}
\address[T.~Nabhani]{Faculty of Science and Humanities, Shaqra University, Al-Dawadmi, Saudi Arabia}
\email{\tt tnabhani@su.edu.sa}
\author[S.~S.~Ablaev]{Seydamet S. Ablaev}
\address[S.~S.~Ablaev]{V. I. Vernadsky Crimean Federal University, Russia.}
\email{\tt seydamet.ablaev@yandex.ru}
\keywords{Mirror-descent, Convex function, Variational Inequality, Monotone operator, Weighting scheme, Inequality type constraints.}
\begin{document}

\begin{abstract}
This paper is devoted to the variational inequality problems. We consider two classes of problems, the first is classical constrained variational inequality, and the second is the same problem with functional (inequality type) constraints. To solve these problems, we propose mirror descent-type methods with a weighting scheme for the generated points in each iteration of the algorithms. This scheme assigns smaller weights to the initial points and larger weights to the most recent points, thus it improves the convergence rate of the proposed methods. For the variational inequality problem with functional constraints, the proposed method switches between productive and non-productive steps in the dependence on the values of the functional constraints at iterations. We analyze the proposed methods for the time-varying step sizes and prove the optimal convergence rate for variational inequality problems with bounded and monotone operators.  The results of numerical experiments of the proposed methods for classical constrained variational inequality problems show a significant improvement over the modified projection method.
\end{abstract}

\maketitle

\section{Introduction}
Variational inequalities (VIs) cover as a special case many optimization problems such as minimization problems, saddle point problems, and fixed point problems (see Examples \ref{ex:minproblem}, \ref{ex:saddleproblem} and \ref{ex:fixedproblem}, below). They often arise in various mathematical problems, such as optimal control, partial differential equations, mechanics, finance, and many others. They play a key role in solving equilibrium and complementarity problems \cite{facchinei2003finite,harker1990finite}, in machine learning research such as generative adversarial networks \cite{goodfellow2020generative}, supervised/unsupervised learning \cite{joachims2005support,xu2004maximum}, reinforcement learning \cite{jin2020efficiently,omidshafiei2017deep}, adversarial training \cite{madry2017towards}, and generative models \cite{daskalakis2017training,gidel2018variational}.

Numerous researchers have dedicated their efforts to exploring theoretical aspects related to the existence and stability of solutions and constructing iterative methods for solving the classical VIs (by classical, we mean the problems without functional ''inequality types'' constraints), see \eqref{main_constrained_prob}. A significant contribution to the development of numerical methods for solving the classical VIs was made in the 1970’s, when the extragradient method was proposed in \cite{korpelevich1976extragradient}. More recently, Nemirovski in his seminal work \cite{nemirovski2004prox} proposed a non-Euclidean variant of this method, called the Mirror Prox algorithm, which can be applied to Lipschitz continuous operators.   Different methods with similar complexity were also proposed in \cite{auslender2005interior,gasnikov2019adaptive,nesterov2007dual}. Besides that, in \cite{nesterov2007dual}, Nesterov proposed a method for variational inequalities with a bounded variation of the operator, i.e., with a non-smooth operator. There is also extensive literature on variations of extragradient method that avoid taking two steps or two gradient computations per iteration, and so on (see for example \cite{hsieh2019convergence,malitsky2020forward}). 

Another important class of VIs is the problem with functional constraints (inequality type), see \eqref{main_func_cons_1}. The presence of such type of the constraints makes these problems more difficult to solve. This class of problems arises in many fields of mathematics, among them are economic equilibrium models \cite{Levin1993Mathematical}, game theory \cite{Garcia1981Pathways}, constrained Markov potential games \cite{Alatur2023Provably,Jordan2024Independent}, generalized Nash equilibrium problems with jointly-convex constraints \cite{Facchinei2010Generalized,Jordan2023First} and hierarchical programming problems \cite{Migdalas1996Hierarchical}, in mathematical physics \cite{Baiocchi1988Variational}. See \cite{Antipin2000Solution} to see some examples. Also, this class of problems encompasses important applications in machine learning including reinforcement learning with safety constraints \cite{Tengyu2021}, and learning with fairness constraints  \cite{Lowy2022,Muhammad2019}.

For VIs with functional constraints, the previous works have focused on primal-dual algorithms based on the (augmented) Lagrangian function to handle the constraints and penalty methods \cite{Auslender1999Asymptotic,Auslender2003Variational,He2004modified,Zhu2003Augmented}. These algorithms and their convergence guarantees crucially depend on information about the optimal Lagrange multipliers. In \cite{Zhang2024Primal}, it was proposed a primal method without knowing any information on the optimal Lagrange multipliers, with proving its convergence rate for the problem with monotone operators under smooth constraints. In \cite{Yang2023Solving }, it was presented a first-order method (ACVI) which combines path-following interior point methods and primal-dual methods. In \cite{Chavdarova2024Primal}, the authors proposed a primal-dual approach to solve the VIs with general functional constraints by taking the last iteration of ACVI. Although there are many works for the VIs with functional constraints, they remain very few compared to the existing works for the classical constrained problem.

In this paper, we propose Mirror Descent type methods for solving the classical variational inequality problem, and the same problem with functional constraints (inequality types), see problems \eqref{main_constrained_prob} and \eqref{main_func_cons_1}. The mirror descent method, for minimization problems, originated in \cite{Nemirovskii1979efficient,Nemirovsky1983Complexity} and was later analyzed in \cite{Beck2003Mirror}, is considered as the non-Euclidean extension of standard subgradient methods. This method is used in many applications, see \cite{applications_tomography_2001,article:Nazin_2011,article:Nazin_2014,article:Nazin_2013} and references therein.  The standard subgradient methods employ the Euclidean distance function with a suitable step-size in the projection step. Mirror descent extends the standard projected subgradient methods by employing a nonlinear distance function with an optimal step-size in the nonlinear projection step \cite{article:luong_weighted_mirror_2016}. The Mirror Descent method not only generalizes the standard subgradient methods but also achieves a better convergence rate \cite{article:doan_2019}. It is also applicable to optimization problems in Banach spaces where gradient descent is not \cite{article:doan_2019}. An extension of the mirror descent method for constrained problems was proposed in \cite{article:beck_comirror_2010,Nemirovsky1983Complexity}. The class of non-smooth optimization problems with non-smooth functional constraints attracts widespread interest in many areas of modern large-scale optimization and its applications \cite{Nemirovski_Robust,Shpirko_primal}. In terms of continuous optimization with functional constraints, there is a long history of studies. The monographs in this area include \cite{24,94}. Some of the works on first-order methods for convex optimization problems with convex functional constraints include (for example, but not limited to) \cite{article:adaptive_mirror_2018,71,stonyakin2018,stonyakin2019,119} for the deterministic setting and \cite{3,2,66,126} for the stochastic setting. 

Recently in \cite{Zhu2024Convergence}, for the projected subgradient method, the optimal convergence rate was proved using the previously mentioned time-varying step size with a new weighting scheme for the generated points each iteration of the algorithm.  This convergence rate remains the same (optimal) even if we slightly increase the weight of the most recent points, thereby relaxing the ergodic sense. These results were recently extended to mirror descent methods for constrained minimization problems in \cite{Alkousa2024Optimal} and for minimization problems with functional constraints (inequality type) in \cite{Alkousa2024Mirror}. 

In this paper, for the classical constrained variational inequality problem, we propose a mirror descent-type method (Algorithm \ref{alg_mirror_descent}) with a weighting scheme for the points generated in each iteration of the algorithm. We extend Algorithm \ref{alg_mirror_descent} (see Algorithm \ref{alg:MD_func_cons}) to be applicable to the variational inequality problem with functional constraints by switching between productive and non-productive steps. We analyze the proposed methods for the time-varying step sizes and obtain the optimal convergence rate (for Algorithm \ref{alg_mirror_descent}) for the class of variational inequality problems with bounded and monotone operators. 

The paper consists of an introduction and five main sections. In Sect. \ref{sect_basics}  we mentioned the basic facts, definitions, and tools for variational inequalities.  Sect. \ref{sectinMirror} devoted to the classical constraint variational inequality problem. We proposed a mirror descent method (Algorithm \ref{alg_mirror_descent}) with a weighting scheme for the points generated in each iteration of the algorithm, we analyzed Algorithm \ref{alg_mirror_descent} and proved its optimal convergence rate for the class of variational inequality problems with bounded and monotone operators. In Sect. \ref{sectinAlgs}, we proposed an extension of Algorithm \ref{alg_mirror_descent} (see \ref{alg:MD_func_cons}) to solve a more complicated variational inequality problem with functional constraints. In Sect. \ref{sect_numerical} we present numerical experiments that demonstrate the efficiency of the proposed weighting scheme in Algorithm \ref{alg_mirror_descent}, and compare its work with a modified projection method, proposed in \cite{Khanh2014Modified}, to solve some examples of the variational inequality problem. In the last section \ref{sec_conclusions}, we review the results obtained in the paper.

\section{Fundamentals}\label{sect_basics}

Let $(\mathbf{E},\|\cdot\|)$ be a normed finite-dimensional vector space, with an arbitrary norm $\|\cdot\|$, and $\mathbf{E}^*$ be the conjugate space of $\mathbf{E}$ with the following norm
$$
    \|y\|_{*}=\max\limits_{x \in \mathbf{E}}\{\langle y,x\rangle: \|x\|\leq1\},
$$
where $\langle y,x\rangle$ is the value of the continuous linear functional $y \in \mathbf{E}^*$ at $x \in \mathbf{E}$.

Let $Q \subset \mathbf{E}^n$ be a convex compact set with a diameter $D >0$, i.e., $\max_{x,y \in Q}\|x-y\| \leq D$,  and $\psi: Q \longrightarrow \mathbb{R}$ be a proper closed differentiable and $\sigma$-strongly convex (called prox-function or distance generating function). The corresponding Bregman divergence is defined as 
$$
    V_{\psi} (x, y) = \psi (x) - \psi (y) - \langle \nabla \psi (y), x - y \rangle, \quad \forall x , y \in Q. 
$$
For the Bregmann divergence, it holds the following inequality
\begin{equation}\label{eq_breg}
    V_{\psi}(x, y) \geq \frac{\sigma}{2} \|y - x\|^2, \quad \forall x, y \in Q. 
\end{equation}

\begin{definition}($\delta$-monotone operator). 
Let $\delta > 0$. The operator $F :Q \longrightarrow \textbf{E}^*$ is called $\delta$-monotone, if it holds
\begin{equation}\label{d_monot}
    \langle F(y)-F(x),y-x\rangle \geq -\delta, \quad \forall x, y \in Q.
\end{equation}
\end{definition}
For example, we can consider $F = \nabla_{\delta} f$ for $\delta$-subgradient $\nabla_{\delta} f(x)$ of convex function $f$ at a point $x \in Q$: $f(y) - f(x) \geq \langle  \nabla_{\delta} f(x), y - x \rangle - \delta $ for each $y \in Q$ (see e.g.,  Chapter 5 in \cite{Polyak}). 

When $\delta = 0$, then the operator $F$ is called monotone, i.e.,  
\begin{equation}\label{eq:CondMonotone}
    \langle F(x) - F(y) , x - y \rangle \geq 0, \quad \forall x, y \in Q.
\end{equation}

We say that the operator $F$ is bounded on $Q$, if there exist $L_F >0$ such that 
\begin{equation}\label{bound_cond}
    \|F (x)\|_* \leq L_F, \quad \forall x \in Q. 
\end{equation}

The following identity, known as the three points identity, is essential in analyzing the mirror descent method.

\begin{lemma}\label{three_points_lemma}(Three points identity \cite{Chen1993}) 
Suppose that $\psi: \mathbf{E} \longrightarrow (- \infty, \infty]$ is a proper
closed, convex, and differentiable function over $\operatorname{dom}(\partial \psi)$. Let $a, b \in \operatorname{dom}(\partial(\psi))$ and $c \in \operatorname{dom} (\psi)$. Then it holds 
\begin{equation}\label{eq_three_points}
    \left\langle\nabla \psi(b)-\nabla \psi(a), c-a\right\rangle=V_{\psi}(c,a)+V_{\psi}(a, b)-V_{\psi}(c, b) .
\end{equation}
\end{lemma}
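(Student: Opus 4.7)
The plan is to reduce the identity to a routine algebraic expansion, since the claim is a purely formal consequence of the definition of the Bregman divergence. The natural move is to substitute the definition $V_\psi(x,y) = \psi(x) - \psi(y) - \langle \nabla\psi(y), x-y\rangle$ into the right-hand side for each of the three Bregman terms $V_\psi(c,a)$, $V_\psi(a,b)$, and $V_\psi(c,b)$, and then to collect the resulting six scalar terms and three inner products.

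After the substitution I would separate the $\psi$-values from the inner-product contributions. The six $\psi$-values, namely $\psi(c)-\psi(a)+\psi(a)-\psi(b)-\psi(c)+\psi(b)$, telescope to zero. What remains is
\[
-\langle \nabla\psi(a), c-a\rangle - \langle \nabla\psi(b), a-b\rangle + \langle \nabla\psi(b), c-b\rangle,
\]
and combining the two terms involving $\nabla\psi(b)$ using linearity yields $\langle \nabla\psi(b), (c-b)-(a-b)\rangle = \langle \nabla\psi(b), c-a\rangle$. Together with the remaining $-\langle \nabla\psi(a), c-a\rangle$ this produces exactly $\langle \nabla\psi(b) - \nabla\psi(a), c-a\rangle$, which is the left-hand side of \eqref{eq_three_points}.

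The only mildly delicate step, if one can call it that, is getting the signs right when regrouping the gradient terms, since the three Bregman divergences evaluate $\nabla\psi$ at mixed base points ($a$ appears in the second slot once, $b$ appears there twice, with different first arguments). There is no analytic obstacle: the hypotheses $a,b \in \operatorname{dom}(\partial\psi)$ and $c \in \operatorname{dom}(\psi)$ are used only to guarantee that $\nabla\psi(a)$, $\nabla\psi(b)$, and $\psi(c)$ are well defined, and neither convexity nor strong convexity of $\psi$ plays any role in the identity itself.
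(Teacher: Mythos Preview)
Your proof is correct: the three-points identity is indeed a purely algebraic consequence of the definition of the Bregman divergence, and your expansion and regrouping of the $\psi$-values and inner-product terms is accurate. Note that the paper does not supply its own proof of this lemma --- it is stated with a citation to \cite{Chen1993} --- so there is no in-paper argument to compare against; your direct verification is the standard one.
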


\noindent
\textbf{Fenchel-Young inequality}(\cite{Beck2003Mirror}). For any $b \in \mathbf{E}, a \in \mathbf{E}^*$, it holds the following inequality
\begin{equation}\label{Fenchel_Young_ineq}
     \langle  a , b \rangle \leq \frac{\|a\|_*^2}{2 \lambda} + \frac{\lambda\|b\|^2}{2}, \quad \forall \lambda > 0.
\end{equation}

\section{Mirror descent method for constrained variational inequality  problem}\label{sectinMirror}
In this section, we consider the following variational inequality problem
\begin{equation}\label{main_constrained_prob}
    \text{Find} \quad x^* \in Q  : \quad  \langle F(x), x^* - x \rangle \leq 0 \quad \forall x \in Q, 
\end{equation}
where $F: Q \longrightarrow \textbf{E}^*$ is a continuous, bounded (i.e., \eqref{bound_cond} holds), and $\delta$-monotone operator (i.e., \eqref{d_monot} holds). 

Under the assumption of continuity and monotonicity (i.e., $\delta = 0 $) of the operator $F$, the problem \eqref{main_constrained_prob} is equivalent to a Stampacchia \cite{giannessi1998minty} (or strong \cite{nesterov2007dual}) variational inequality, in which the goal is to find $x^* \in Q $ such that 
\begin{equation}\label{prob:VIstrong}
    \langle F(x^*), x^* - x \rangle \leq 0, \quad \forall x \in Q.
\end{equation}

To emphasize the extensiveness of the problem \eqref{main_constrained_prob} (or \eqref{prob:VIstrong}), we mention three common special cases for VIs.

\begin{example}[Minimization problem]\label{ex:minproblem}
Let us consider the minimization problem 
\begin{equation}\label{min_problem}
    \min_{x \in Q} f(x), 
\end{equation}
and assume that $F(x) = \nabla f(x)$, where $\nabla f(x)$ denotes the (sub)gradient of $f$ at $x$. Then, if $f$ is convex, it can be proved that $x^* \in Q$  is a solution to \eqref{prob:VIstrong} if and only if $x^* \in Q$ is a solution to \eqref{min_problem}.
\end{example}

\begin{example}[Saddle point problem]\label{ex:saddleproblem}  
 Let us consider the saddle point problem
\begin{equation}\label{minmax_problem}
    \min_{u \in Q_u}\max_{v \in Q_v}  f(u, v), 
\end{equation}
\end{example}
and assume that $F(x) : = F(u, v) = \left(\nabla_u f(u,v), -\nabla_v f(u, v)\right)^{\top}$, where $Q = Q_u \times Q_v$ with $Q_u \subseteq \mathbb{R}^{n_u}, Q_v \subseteq \mathbb{R}^{n_v}$. Then if $f$ is convex in $u$ and concave in $v$, it can be proved that $x^* \in Q$ is a solution to \eqref{prob:VIstrong} if and only if $x^* = (u^*, v^*) \in Q$ is a solution to \eqref{minmax_problem}. 

\begin{example}[Fixed point problem]\label{ex:fixedproblem}
Let us consider the fixed point
problem
\begin{equation}\label{fixed_prob}
    \text{find} \;\; x^* \in Q \;\; \text{such that} \quad T(x^*) = x^*,
\end{equation}
where $T: \mathbb{R}^n \longrightarrow \mathbb{R}^n $ is an operator. By taking $F(x)  = x - T(x)$, it can be proved that $x^* \in Q = \mathbb{R}^n$ is a solution to \eqref{prob:VIstrong} if $F(x^*) = \textbf{0} \in \mathbb{R}^n$, i.e., $x^*$ is a solution to \eqref{fixed_prob}. 
\end{example}

\begin{definition}
For some $\varepsilon >0$, we call a point $\widehat{x} \in Q$ an $\varepsilon$-solution of the problem \eqref{main_constrained_prob}, if 
\begin{equation}
    \left\langle F(x), \widehat{x} - x \right \rangle \leq \varepsilon,  \quad  \forall x \in Q. 
\end{equation}
\end{definition}

Following \cite{nesterov2007dual}, to assess the quality of a candidate solution $\widehat{x}$, we use the following restricted  gap (or merit) function
\begin{equation}\label{eq:gap}
    \operatorname{Gap}(\widehat{x}) = \max_{u \in Q} \langle F(u), \widehat{x}  - u \rangle .
\end{equation}

Thus, our goal is to find an approximate solution to the problem \eqref{main_constrained_prob}, that is, a point $\widehat{x} \in Q$ such that the following inequality holds
\begin{equation}\label{eq:Appr}
    \operatorname{Gap}(\widehat{x}) = \max_{u \in Q} \langle F(u), \widehat{x}  - u \rangle \leq \varepsilon,
\end{equation}
for some $\varepsilon >0$.

For problem \eqref{main_constrained_prob}, we propose an Algorithm \ref{alg_mirror_descent}, under consideration
\begin{equation}\label{cond_x1xstar}
    V_{\psi} (x, y) \leq  R < \infty   
    \quad \forall x,y \in Q,  
\end{equation}
for some $R> 0$. 

\begin{algorithm}[!ht]
\caption{Mirror descent method for constrained variational inequality problem.}\label{alg_mirror_descent}
\begin{algorithmic}[1]
\REQUIRE step sizes $\{\gamma_k\}_{k \geq 1}$,  initial point $x^1  \in Q$, number of iterations $N$.
\FOR{$k= 1, 2, \ldots, N$}
\STATE $x^{k+1} = \arg\min\limits_{x \in Q} \left\{ \langle x, F(x^k)  \rangle + \frac{1}{\gamma_k} V_{\psi}(x,x^k) \right\} $.
\ENDFOR
\end{algorithmic}
\end{algorithm}

For Algorithm \ref{alg_mirror_descent}, we have the following result.

\begin{theorem}\label{theo_main_ineq_mirror_desc}
Let $F: Q \longrightarrow \textbf{E}^*$ be a continuous, bounded, and $\delta$-monotone operator. Then for problem \eqref{main_constrained_prob}, by Algorithm \ref{alg_mirror_descent}, with a positive non-increasing sequence of step sizes $\{\gamma_k\}_{k \geq 1}$, for any fixed $m \geq -1$, it satisfies the following inequality
\begin{equation}\label{main_ineq_mirror_desc}
    \operatorname{Gap} (\widehat{x})  \leq  \frac{1}{\sum_{k= 1}^{N} \gamma_k^{-m}} \left( \frac{R}{\gamma_N^{m+1}} + \frac{1}{2 \sigma} \sum_{k= 1}^{N} \frac{ \|F(x^k)\|_*^2}{\gamma_k^{m -1}} \right) + \delta,
\end{equation}
where 
$$
    \widehat{x} = \frac{1}{\sum_{k= 1}^{N} \gamma_k^{-m}} \sum_{k = 1}^{N} \gamma_k^{-m} x^k .
$$ 
\end{theorem}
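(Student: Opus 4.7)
The plan is to derive a per-iteration inequality from the mirror descent update, then sum it with the weights $\gamma_k^{-m-1}$ after a suitable decomposition, and finally invoke $\delta$-monotonicity to convert the result into a bound on $\operatorname{Gap}(\widehat{x})$.

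First, I would exploit the optimality condition of the proximal subproblem defining $x^{k+1}$: for every $u\in Q$,
\[
\Bigl\langle F(x^k) + \tfrac{1}{\gamma_k}\bigl(\nabla\psi(x^{k+1})-\nabla\psi(x^k)\bigr),\, u-x^{k+1}\Bigr\rangle \geq 0,
\]
which rearranges to $\gamma_k\langle F(x^k), x^{k+1}-u\rangle \leq \langle \nabla\psi(x^k)-\nabla\psi(x^{k+1}),\, u-x^{k+1}\rangle$. Applying the three points identity \eqref{eq_three_points} with $a=x^{k+1}$, $b=x^k$, $c=u$ converts the right-hand side to $V_\psi(u,x^k) - V_\psi(u,x^{k+1}) - V_\psi(x^{k+1},x^k)$. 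Then I would split $\langle F(x^k), x^k-u\rangle = \langle F(x^k), x^k-x^{k+1}\rangle + \langle F(x^k), x^{k+1}-u\rangle$ and bound the first piece via Fenchel--Young \eqref{Fenchel_Young_ineq} with $\lambda=\sigma$ and strong convexity \eqref{eq_breg}:
\[
\gamma_k\langle F(x^k), x^k-x^{k+1}\rangle \leq \frac{\gamma_k^2\|F(x^k)\|_*^2}{2\sigma} + V_\psi(x^{k+1},x^k),
\]
so that the $-V_\psi(x^{k+1},x^k)$ term cancels and
\[
\langle F(x^k), x^k-u\rangle \;\leq\; \frac{\gamma_k\|F(x^k)\|_*^2}{2\sigma} + \frac{1}{\gamma_k}\bigl(V_\psi(u,x^k)-V_\psi(u,x^{k+1})\bigr).
\]

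Next, I multiply by $\gamma_k^{-m}$ and sum for $k=1,\ldots,N$. The residual-sum part immediately yields $\frac{1}{2\sigma}\sum_k \|F(x^k)\|_*^2/\gamma_k^{m-1}$, while the Bregman contribution is $\sum_k \gamma_k^{-m-1}\bigl(V_\psi(u,x^k)-V_\psi(u,x^{k+1})\bigr)$. Because $\{\gamma_k\}$ is non-increasing and $m\geq -1$, the weights $a_k:=\gamma_k^{-m-1}$ are non-decreasing; Abel summation gives
\[
\sum_{k=1}^{N} a_k\bigl(V_\psi(u,x^k)-V_\psi(u,x^{k+1})\bigr) = a_1 V_\psi(u,x^1) + \sum_{k=2}^{N}(a_k-a_{k-1})V_\psi(u,x^k) - a_N V_\psi(u,x^{N+1}).
\]
Dropping the negative last term and bounding every $V_\psi(u,x^k)\leq R$ by \eqref{cond_x1xstar}, the sum telescopes down to $R\,a_N = R/\gamma_N^{m+1}$.

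Finally, to pass to the gap, I use $\delta$-monotonicity \eqref{d_monot}, which gives $\langle F(u), x^k-u\rangle \leq \langle F(x^k), x^k-u\rangle + \delta$ for every $u\in Q$. Taking the $\gamma_k^{-m}$-weighted average over $k$ and using convexity of the linear functional $u\mapsto\langle F(u), \cdot-u\rangle$ in its first slot (actually just linearity in the iterate), we obtain
\[
\langle F(u), \widehat{x}-u\rangle \;\leq\; \frac{1}{\sum_{k=1}^{N}\gamma_k^{-m}}\sum_{k=1}^{N}\gamma_k^{-m}\langle F(x^k), x^k-u\rangle + \delta,
\]
and combining with the previous bound then taking the supremum over $u\in Q$ produces \eqref{main_ineq_mirror_desc}. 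The main obstacle will be the Abel-summation step: when $m\neq 1$ the usual telescoping fails, and one must carefully verify that the monotonicity of $a_k=\gamma_k^{-m-1}$ (which requires precisely $m\geq -1$, explaining the hypothesis) makes the bound $a_k(b_k-b_{k+1})\leq R(a_k-a_{k-1})$ legitimate after summation; all the other steps are routine once that is in place.
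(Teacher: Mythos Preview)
Your proof is correct and follows essentially the same route as the paper's: optimality condition plus three-points identity to get the per-iteration inequality, Fenchel--Young with $\lambda=\sigma$ and \eqref{eq_breg} to cancel $V_\psi(x^{k+1},x^k)$, Abel-type summation of the Bregman terms with the non-decreasing weights $\gamma_k^{-m-1}$ bounded by $R/\gamma_N^{m+1}$, and $\delta$-monotonicity to pass to $\langle F(u),\widehat{x}-u\rangle$. The only cosmetic difference is that the paper invokes $\delta$-monotonicity at the per-iteration level before summing, whereas you apply it after taking the weighted average; the two orderings are equivalent.
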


\begin{proof}
Let $\widetilde{f}(x) :=  \left\langle x, F(x^k)  \right\rangle + \frac{1}{\gamma_k} V_{\psi}(x,x^k)$. From Algorithm \ref{alg_mirror_descent}, we have $x^{k+1} =\arg\min\limits_{x \in Q} \widetilde{f}(x) $. By the optimality condition, we get
$$
    \left\langle \nabla \widetilde{f}(x^{k+1}), x - x^{k+1} \right\rangle \geq 0, \quad \forall x \in Q.
$$
Thus, 
$$
    \left\langle \gamma_k F(x^k) + \nabla \psi(x^{k+1})- \nabla \psi(x^k), x - x^{k+1} \right\rangle \geq 0, \quad \forall x \in Q.
$$
i.e., 
\begin{equation*}
    \left\langle \gamma_k F(x^k), x^{k+1} - x  \right\rangle \leq - \left\langle \nabla \psi(x^{k}) -  \nabla \psi(x^{k+1}), x - x^{k+1}\right\rangle, \quad \forall x \in Q.
\end{equation*}

By Lemma \ref{three_points_lemma}, for any $x \in Q$, we have 
\begin{align*}
     & \quad - \left\langle \nabla \psi(x^{k}) -  \nabla \psi(x^{k+1}), x - x^{k+1}\right\rangle,
    \\& = - \left( V_{\psi} (x, x^{k+1}) + V_{\psi} (x^{k+1}, x^k) - V_{\psi} (x, x^{k}) \right) 
    \\& = V_{\psi} (x, x^{k}) - V_{\psi} (x, x^{k+1}) - V_{\psi} (x^{k+1}, x^k). 
\end{align*}
Thus, we get
\[
    \left\langle \gamma_k F(x^k), x^{k+1} - x  \right\rangle \leq V_{\psi} (x, x^{k}) - V_{\psi} (x, x^{k+1}) - V_{\psi} (x^{k+1}, x^k), \quad \forall x \in Q.
\]
This means that for any $x \in Q$, we have
\begin{align*}
    \gamma_k \left\langle  F(x^k), x^{k} - x  \right\rangle & \leq V_{\psi} (x, x^{k}) - V_{\psi} (x, x^{k+1}) - V_{\psi} (x^{k+1}, x^k)
    \\& \quad + \left\langle \gamma_k F(x^k), x^k - x^{k+1} \right\rangle. 
\end{align*}

By the Fenchel-Young inequality \eqref{Fenchel_Young_ineq}, with $\lambda = \sigma > 0 $, we find 
$$
    \left\langle \gamma_k F(x^k), x^k - x^{k+1} \right\rangle \leq \frac{\gamma_k^2}{2 \sigma} \|F(x^k)\|_*^2 + \frac{\sigma}{2} \|x^k - x^{k+1}\|^2. 
$$
Therefore, for any $x \in Q$, we get
\begin{align*}
    \gamma_k \left \langle F(x^k), x^k - x \right\rangle & \leq V_{\psi} (x, x^{k}) - V_{\psi} (x, x^{k+1}) - V_{\psi} (x^{k+1}, x^k) 
    \\& \quad + \frac{\gamma_k^2}{2 \sigma} \|F(x^k)\|_*^2  + \frac{\sigma}{2} \|x^k - x^{k+1}\|^2. 
\end{align*}

But from \eqref{eq_breg}, we have 
$$
    V_{\psi} (x^{k+1}, x^{k}) \geq \frac{\sigma}{2} \|x^{k+1} - x^k\|^2 .
$$
Thus, for any $x \in Q$, we get the following inequality
\begin{equation}\label{eq:452df}
    \left \langle F(x^k), x^k - x \right\rangle \leq \frac{1}{\gamma_k} \left( V_{\psi} (x, x^{k}) - V_{\psi} (x, x^{k+1}) \right) + \frac{\gamma_k}{2 \sigma} \| F(x^k)\|_*^2. 
\end{equation}

Since $F$ is $\delta$-monotone operator, we get 
\[
    \left\langle F(x), x^k - x\right\rangle - \delta \leq \left\langle F(x^k), x^k - x \right\rangle, \quad \forall x \in Q. 
\]
Therefore, for any $x \in Q$, we have
\begin{equation}\label{eq_2}
    \left \langle F(x), x^k - x \right\rangle \leq \frac{1}{\gamma_k} \left( V_{\psi} (x, x^{k}) - V_{\psi} (x, x^{k+1}) \right) + \frac{\gamma_k}{2 \sigma} \| F(x^k)\|_*^2 + \delta. 
\end{equation}

By multiplying both sides of \eqref{eq_2} by $\frac{1}{\gamma_k^m}$, and taking the sum from $1$ to $N$, for any $x \in Q$, we get
\begin{align}
    \sum_{k = 1}^{N} \frac{1}{\gamma_k^m} \left \langle F(x), x^k - x \right\rangle & \leq  \sum_{k = 1}^{N}\frac{1}{\gamma_k^{m+1}} \left( V_{\psi} (x, x^{k}) - V_{\psi} (x, x^{k+1}) \right)  \nonumber
    \\& \;\;\;\; + \frac{1}{2 \sigma}\sum_{k = 1}^{N} \frac{\|F(x^k)\|_*^2}{ \gamma_k^{m-1}} + \sum_{k = 1}^{N} \frac{\delta}{\gamma_k^{m}}. \label{ghjklk}
\end{align}
But, 
\begin{align}\label{eq:gfdfd}
     \sum_{k = 1}^{N} \frac{1}{\gamma_k^m} \left \langle F(x), x^k - x \right\rangle   & = \left( \sum_{k = 1}^{N}\gamma_k^{-m}\right) \left \langle F(x), \frac{1}{ \sum_{k = 1}^{N} \gamma_k^{-m}} \sum_{k = 1}^{N} \gamma_k^{-m} x^k - x \right\rangle \nonumber
     \\& = \left( \sum_{k = 1}^{N}\gamma_k^{-m}\right) \left \langle F(x), \widehat{x} - x \right\rangle. 
\end{align}

For any $x \in Q$, we have
\begin{align*}
    & \quad \; \sum_{k = 1}^{N}\frac{1}{\gamma_k^{m+1}} \left( V_{\psi} (x, x^{k}) - V_{\psi} (x, x^{k+1}) \right) \nonumber
    \\& = \frac{1}{\gamma_1^{m+1}} \left( V_{\psi}(x, x^1)  - V_{\psi}(x, x^2)\right)    + \sum_{k = 2}^{N-1}\frac{1}{\gamma_k^{m+1}} \left( V_{\psi}(x, x^k)  - V_{\psi}(x, x^{k+1})\right) \nonumber
    \\& \;\;\;\; + \frac{1}{\gamma_N^{m+1}} \left( V_{\psi}(x, x^N)  - V_{\psi}(x, x^{N+1})\right)  \nonumber
    \\& \leq \frac{1}{\gamma_1^{m+1}} V_{\psi}(x, x^1) +   \frac{1}{\gamma_N^{m+1}} V_{\psi}(x, x^N) + \sum_{k = 2}^{N-1} \frac{1}{\gamma_k^{m+1}} V_{\psi}(x, x^k)  \nonumber
    \\& \quad  - \frac{1}{\gamma_1^{m+1}} V_{\psi}(x, x^2)   - \sum_{k = 2}^{N-1} \frac{1}{\gamma_k^{m+1}} V_{\psi}(x, x^{k+1})  \nonumber
    \\& = \frac{1}{\gamma_1^{m+1}} V_{\psi}(x, x^1) + \sum_{k = 2}^{N} \frac{1}{\gamma_k^{m+1}} V_{\psi}(x, x^k)  - \sum_{k = 2}^{N} \frac{1}{\gamma_{k-1}^{m+1}} V_{\psi}(x, x^k) \nonumber
    \\& = \frac{1}{\gamma_1^{m+1}} V_{\psi}(x, x^1) + \sum_{k = 2}^{N} \left( \frac{1}{\gamma_k^{m+1}} -\frac{1}{\gamma_{k-1}^{m+1}} \right)  V_{\psi}(x, x^k) \nonumber
   \\& \leq \frac{R}{\gamma_1^{m+1}}  + R \sum_{k = 2}^{N} \left( \frac{1}{\gamma_k^{m+1}} - \frac{1}{\gamma_{k-1}^{m+1}} \right)  \nonumber
   \\& = \frac{R}{\gamma_1^{m+1}} +  R\left(-\frac{1}{\gamma_1^{m+1}} + \frac{1}{\gamma_{N}^{m+1}} \right) = \frac{R}{\gamma_{N}^{m+1}}. 
\end{align*}

Therefore, from \eqref{ghjklk}, \eqref{eq:gfdfd}, and the last inequality, we get
\begin{equation*}
    \left(\sum_{k = 1}^{N} \gamma_k^{-m}\right) \max_{x \in Q} \left \langle F(x), \widehat{x} - x \right\rangle  \leq  \frac{R}{\gamma_N^{m+1}}  + \frac{1}{2 \sigma} \sum_{k = 1}^{N} \frac{\|F(x^k)\|_*^2}{\gamma_k^{m-1}} +   \sum_{k = 1}^{N} \frac{\delta}{\gamma_k^{m}}. 
\end{equation*}

By dividing by $\sum_{k= 1}^{N} \gamma_k^{-m}$,  we get the desired inequality
\begin{equation*}
    \max_{x \in Q} \left \langle F(x), \widehat{x} - x \right\rangle  \leq  \frac{1}{\sum_{k= 1}^{N} \gamma_k^{-m}} \left( \frac{R}{\gamma_N^{m+1}} + \frac{1}{2 \sigma} \sum_{k= 1}^{N} \frac{\|F(x^k)\|_*^2}{\gamma_k^{m -1}} \right) + \delta.
\end{equation*}
\end{proof}

Now, let us see the convergence rate of Algorithm \ref{alg_mirror_descent} with the following (non-adaptive) time-varying step size rule
\begin{equation}\label{steps_rules}
    \gamma_k = \frac{\sqrt{2 \sigma}}{L_F \sqrt{k}}, \quad   k = 1, 2, \ldots, N, 
\end{equation}
and different values of the parameter $m \geq -1$. 

We mention here that for the following adaptive step size scheme
\begin{equation}\label{adaptive_steps}
    \gamma_k = \frac{\sqrt{2 \sigma}}{\|F(x^k)\|_* \sqrt{k}} \quad k = 1, \ldots N, 
\end{equation}
there is no guarantee that the sequence $\{\gamma_k\}_{k \geq 1}$ is non-increasing. As a result, this step size does not satisfy one of the conditions of Theorem \ref{theo_main_ineq_mirror_desc}. However, Algorithm \ref{alg_mirror_descent} performs well in practice, and the convergence rate remains the same as the step size defined in \eqref{steps_rules}.  Therefore, we need to consider restructuring the scheme of step sizes so that it becomes adaptive and allows us to obtain the same results as previously obtained for the non-adaptive ones.

\begin{corollary}\label{corollary_mirror_m_minus1}
Let $F: Q \longrightarrow \textbf{E}^*$ be a continuous, bounded, and $\delta$-monotone operator. Then for problem \eqref{main_constrained_prob}, by Algorithm \ref{alg_mirror_descent}, with  $m = -1$, and the time-varying step size given in \eqref{steps_rules}, it satisfies the following 
\begin{equation}\label{rate_mirror_k_minus1}
    \operatorname{Gap} (\widetilde{x}) \leq \frac{L_F \left(1+R + \log(N)  \right)}{\sqrt{\sigma }} \cdot \frac{1}{\sqrt{N}} + \delta  = O\left(\frac{\log(N)}{\sqrt{N}}\right) + \delta, 
\end{equation}
where $\widetilde{x} = \frac{1}{\sum_{k = 1}^{N} \gamma_k} \sum_{k= 1}^{N} \gamma_k x^k$. 
\end{corollary}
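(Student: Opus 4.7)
The plan is to specialize Theorem \ref{theo_main_ineq_mirror_desc} at $m=-1$ and then substitute the step size \eqref{steps_rules}, bound $\|F(x^k)\|_*$ using the boundedness assumption \eqref{bound_cond}, and estimate the resulting scalar series. First I note that at $m=-1$ the weighted average $\widehat{x}$ in the theorem coincides with $\widetilde{x}$ as defined in the corollary, since $\gamma_k^{-m}=\gamma_k$. So nothing has to be done about the iterate.

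Next I would expand the right-hand side of \eqref{main_ineq_mirror_desc} at $m=-1$: the exponent $m+1$ on $\gamma_N$ becomes $0$, turning $R/\gamma_N^{m+1}$ into simply $R$, while $\gamma_k^{m-1}=\gamma_k^{-2}$, so the operator term becomes $\frac{1}{2\sigma}\sum_{k=1}^{N}\gamma_k^2\|F(x^k)\|_*^2$. Plugging in $\gamma_k=\sqrt{2\sigma}/(L_F\sqrt{k})$ and using $\|F(x^k)\|_*\le L_F$ gives $\gamma_k^2\|F(x^k)\|_*^2\le 2\sigma/k$, so the operator-norm sum is at most $\sum_{k=1}^{N}1/k\le 1+\log N$.

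For the normalizing factor I would use the elementary bound $\sum_{k=1}^{N}1/\sqrt{k}\ge \sqrt{N}$ (each term is at least $1/\sqrt{N}$), which yields $\sum_{k=1}^{N}\gamma_k \ge \sqrt{2\sigma N}/L_F$. Combining these estimates produces
\begin{equation*}
\operatorname{Gap}(\widetilde{x})\le \frac{L_F}{\sqrt{2\sigma\,N}}\bigl(R+1+\log N\bigr)+\delta,
\end{equation*}
which is at most the stated bound $\frac{L_F(1+R+\log N)}{\sqrt{\sigma}}\cdot\frac{1}{\sqrt{N}}+\delta$.

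There is no real obstacle here; the work is bookkeeping. The only place I would be slightly careful is verifying that $\{\gamma_k\}$ is non-increasing (so the hypotheses of Theorem \ref{theo_main_ineq_mirror_desc} apply), which is immediate from the factor $1/\sqrt{k}$, and keeping the $\sqrt{2}$ factors straight when loosening $1/\sqrt{2\sigma}$ to $1/\sqrt{\sigma}$ to match the stated constant.
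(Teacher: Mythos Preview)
Your proposal is correct and follows essentially the same approach as the paper: specialize Theorem~\ref{theo_main_ineq_mirror_desc} at $m=-1$, substitute the step size \eqref{steps_rules}, use $\|F(x^k)\|_*\le L_F$, and bound the two scalar sums. The only minor difference is that for the denominator the paper uses $\sum_{k=1}^{N}1/\sqrt{k}\ge 2\sqrt{N+1}-2$ together with $2\sqrt{2}(\sqrt{N+1}-1)\ge\sqrt{N}$, whereas your one-line bound $\sum_{k=1}^{N}1/\sqrt{k}\ge\sqrt{N}$ reaches the same conclusion more directly.
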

\begin{proof}
By setting $m = -1$ in \eqref{main_ineq_mirror_desc}, we get the following inequality
\begin{equation}\label{eq_3}
    \operatorname{Gap} (\widetilde{x}) \leq \frac{1}{\sum_{k = 1}^{N} \gamma_k}\left(R + \frac{1}{2 \sigma} \sum_{k = 1}^{N} \gamma_k^2 \|F(x^k)\|_*^2\right) + \delta, 
\end{equation}
where $\widetilde{x} = \frac{1}{\sum_{k = 1}^{N} \gamma_k} \sum_{k= 1}^{N} \gamma_k x^k$. 

When $\gamma_k = \frac{\sqrt{2 \sigma}}{L_F \sqrt{k}}, \, k = 1, 2, \ldots, N$, and since $\|F(x^k)\|_* \leq L_F$, then by substitution in \eqref{eq_3} we find
$$
    \operatorname{Gap} (\widetilde{x}) = \max_{x \in Q} \left \langle F(x), \widetilde{x} - x \right\rangle \leq \frac{L_F}{\sqrt{2 \sigma}} \cdot \frac{R + \sum_{k = 1}^{N} \frac{1}{k} }{\sum_{k = 1}^{N} \frac{1}{\sqrt{k}}} + \delta,
$$
where $\widetilde{x} = \frac{1}{\sum_{k = 1}^{N} \frac{1}{\sqrt{k}}} \sum_{k= 1}^{N} \frac{1}{\sqrt{k}} x^k$. But
\begin{equation*}\label{eq_4}
    \sum_{k = 1}^{N} \frac{1}{k} \leq 1 + \log(N), \quad \text{and} \quad  \sum_{k = 1}^{N} \frac{1}{\sqrt{k}} \geq 2 \sqrt{N+1} - 2, \quad \forall N \geq 1.
\end{equation*}
Therefore,
\begin{equation*}
    \operatorname{Gap} (\widetilde{x})  \leq  \frac{L_F}{\sqrt{2 \sigma}} \cdot \frac{1+R  + \log(N)}{2 \sqrt{N+1} - 2} + \delta  \leq \frac{L_F}{\sqrt{ \sigma}} \cdot \frac{ 1 + R + \log(N)}{\sqrt{N}} + \delta.  
\end{equation*}
In the last inequality, we used the fact $2 \sqrt{2} \left( \sqrt{N+1} - 1\right) \geq \sqrt{N}, \, \forall N \geq 1$.  
\end{proof}

Note that the convergence rate in \eqref{rate_mirror_k_minus1} is suboptimal for the bounded monotone operators (i.e., when $\delta = 0$ in \eqref{d_monot}). 

\bigskip
From Theorem \ref{theo_main_ineq_mirror_desc}, with a special value of the parameter $m$, we can obtain the optimal convergence rate $O\left(\frac{1}{\sqrt{N}}\right)$ of Algorithm \ref{alg_mirror_descent}, with the time-varying step size given in \eqref{steps_rules} and monotone operators (i.e., $\delta = 0$ in \eqref{d_monot}). 

For this, we have the following result. 

\begin{corollary}\label{corollary_mirror_m_0}
Let $F: Q \longrightarrow \textbf{E}^*$ be a continuous, bounded, and $\delta$-monotone operator.  Then for problem \eqref{main_constrained_prob}, by Algorithm \ref{alg_mirror_descent}, with  $m = 0$, and the time-varying step size given in \eqref{steps_rules}, it satisfies the following
\begin{equation}\label{rate_mirror_k_0}
    \operatorname{Gap}(\overline{x}) \leq \frac{L_F \left( 2+ R \right)}{\sqrt{2\sigma}} \cdot \frac{1}{\sqrt{N}} + \delta = O\left(\frac{1}{\sqrt{N}}\right) + \delta, 
\end{equation}
where $\overline{x} = \frac{1}{N} \sum_{k= 1}^{N} x^k$. 
\end{corollary}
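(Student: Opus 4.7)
The plan is to specialize Theorem~\ref{theo_main_ineq_mirror_desc} to the parameter value $m=0$ and the prescribed step-size schedule, so essentially this is a bookkeeping corollary: no new inequality needs to be derived, only the right-hand side of \eqref{main_ineq_mirror_desc} has to be evaluated and bounded.

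First I would note that for $m=0$ we have $\sum_{k=1}^{N}\gamma_k^{-m}=N$, so the weighted average $\widehat{x}$ from Theorem~\ref{theo_main_ineq_mirror_desc} collapses to the uniform average $\overline{x}=\frac{1}{N}\sum_{k=1}^{N}x^k$ as required. Plugging $m=0$ into \eqref{main_ineq_mirror_desc} therefore yields
\[
\operatorname{Gap}(\overline{x})\;\leq\;\frac{1}{N}\left(\frac{R}{\gamma_N}+\frac{1}{2\sigma}\sum_{k=1}^{N}\gamma_k\|F(x^k)\|_*^2\right)+\delta.
\]

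Next I would use the boundedness hypothesis $\|F(x^k)\|_*\leq L_F$ together with $\gamma_k=\sqrt{2\sigma}/(L_F\sqrt{k})$ to evaluate the two pieces. The first term is immediate: $R/\gamma_N=RL_F\sqrt{N}/\sqrt{2\sigma}$, so after division by $N$ it contributes $RL_F/(\sqrt{2\sigma}\sqrt{N})$. For the second term, $\gamma_k\|F(x^k)\|_*^2\leq L_F\sqrt{2\sigma}/\sqrt{k}$, and then the prefactor $1/(2\sigma)$ combines with $\sqrt{2\sigma}$ to leave $L_F/\sqrt{2\sigma}$ times $\sum_{k=1}^N 1/\sqrt{k}$. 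The only analytic ingredient is the standard harmonic-type estimate $\sum_{k=1}^{N}1/\sqrt{k}\leq 2\sqrt{N}$ (the same bound that was already used in the proof of Corollary~\ref{corollary_mirror_m_minus1}), which after dividing by $N$ gives the desired $2L_F/(\sqrt{2\sigma}\sqrt{N})$.

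Adding these two contributions produces $\frac{L_F(R+2)}{\sqrt{2\sigma}}\cdot\frac{1}{\sqrt{N}}+\delta$, which is exactly the right-hand side of \eqref{rate_mirror_k_0}. There is no real obstacle here: the choice $m=0$ is essentially designed so that $\gamma_k^{m-1}=\gamma_k$ and $\gamma_k^{-m}=1$, turning the telescoping-type bound of Theorem~\ref{theo_main_ineq_mirror_desc} into two parallel $O(1/\sqrt{N})$ contributions rather than the $O(\log N/\sqrt{N})$ one obtained for $m=-1$. The only minor care point is to confirm that $\{\gamma_k\}$ is non-increasing so that Theorem~\ref{theo_main_ineq_mirror_desc} actually applies, which is obvious from $\gamma_k\propto 1/\sqrt{k}$.
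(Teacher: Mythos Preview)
Your argument is correct and follows the same route as the paper: specialize \eqref{main_ineq_mirror_desc} at $m=0$, substitute $\gamma_k=\sqrt{2\sigma}/(L_F\sqrt{k})$ and $\|F(x^k)\|_*\le L_F$, and then use $\sum_{k=1}^{N}1/\sqrt{k}\le 2\sqrt{N}$ to bound the sum. The only tiny inaccuracy is the side remark that this harmonic estimate was ``already used'' in Corollary~\ref{corollary_mirror_m_minus1}; there the paper actually invokes the \emph{lower} bound $\sum_k 1/\sqrt{k}\ge 2\sqrt{N+1}-2$, while the upper bound first appears here.
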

\begin{proof}
By setting $m = 0$ in \eqref{main_ineq_mirror_desc}, we get 
\begin{equation}\label{eq_5}
    \operatorname{Gap}(\overline{x}) \leq \frac{1}{N} \left( \frac{R}{\gamma_N} + \frac{1}{2 \sigma} \sum_{k = 1}^{N} \gamma_k \|F(x^k)\|_*^2  \right) + \delta, 
\end{equation}
where $\overline{x}= \frac{1}{N} \sum_{k= 1}^{N} x^k$. 

When $\gamma_k = \frac{\sqrt{2 \sigma}}{L_F \sqrt{k}}, \, k = 1, 2, \ldots, N$, and since $\|F(x^k)\|_* \leq L_F$,  then by substitution in \eqref{eq_5} we find
$$
    \operatorname{Gap}(\overline{x}) \leq  \frac{1}{N} \left(  \frac{R L_F \sqrt{N} }{\sqrt{2 \sigma}} + \frac{L_F}{\sqrt{2 \sigma}} \sum_{k = 1}^{N} \frac{1}{\sqrt{k}} \right) + \delta.
$$
But
\begin{equation*}\label{eq_6}
\sum_{k = 1}^{N} \frac{1}{\sqrt{k}} \leq 2 \sqrt{N}, \quad \forall N \geq 1.
\end{equation*}
Therefore,
\begin{equation*}
    \operatorname{Gap}(\overline{x}) \leq  \frac{1}{N} \cdot \frac{L_F}{\sqrt{2 \sigma}} \left( R \sqrt{N}  + 2 \sqrt{N}\right) + \delta = \frac{L_F \left(2 + R\right)}{\sqrt{2 \sigma}}  \cdot \frac{1}{\sqrt{N}} + \delta.
\end{equation*}
\end{proof}

Also, the same optimal convergence rate for Algorithm \ref{alg_mirror_descent}, with bounded monotone operators (i.e., $\delta = 0$), can be obtained with any fixed $m \geq 1$, and time-varying step size given in \eqref{steps_rules}. 

For this, we have the following result.

\begin{corollary}\label{corollary_mirror_m_all}
Let $F: Q \longrightarrow \textbf{E}^*$ be a continuous, bounded, and $\delta$-monotone operator. Then for problem \eqref{main_constrained_prob}, by Algorithm \ref{alg_mirror_descent}, with any $m \geq 1$, and the time-varying step size given in \eqref{steps_rules}, it satisfies the following 
\begin{equation}\label{rate_mirror_m_all}
    \operatorname{Gap}(\widehat{x})  \leq \frac{L_F (m + 2) (1+ R)}{2 \sqrt{2 \sigma}} \cdot \frac{1}{\sqrt{N}} + \delta = O\left(\frac{1}{\sqrt{N}}\right) + \delta, 
\end{equation}
where $\widehat{x} = \frac{1}{\sum_{k= 1}^{N} \gamma_k^{-m}} \sum_{k = 1}^{N} \gamma_k^{-m} x^k $. 
\end{corollary}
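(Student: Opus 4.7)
The plan is to substitute the prescribed step size $\gamma_k = \sqrt{2\sigma}/(L_F\sqrt{k})$ and any fixed $m \geq 1$ directly into the master inequality \eqref{main_ineq_mirror_desc} from Theorem \ref{theo_main_ineq_mirror_desc}, then reduce the resulting expression to a power of $N$ by elementary sum-integral comparisons, mirroring the pattern of Corollaries \ref{corollary_mirror_m_minus1} and \ref{corollary_mirror_m_0}.

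First, I would compute the three quantities appearing in \eqref{main_ineq_mirror_desc}: writing $c := L_F/\sqrt{2\sigma}$, we have $\gamma_k^{-m} = c^m k^{m/2}$, $\gamma_N^{-(m+1)} = c^{m+1} N^{(m+1)/2}$, and $\gamma_k^{-(m-1)} = c^{m-1} k^{(m-1)/2}$. Using the bound $\|F(x^k)\|_* \leq L_F$ together with $L_F^2/(2\sigma) = c^2$, the middle sum in \eqref{main_ineq_mirror_desc} is controlled by $c^{m+1} \sum_{k=1}^N k^{(m-1)/2}$. After factoring $c^m$ out of both numerator and denominator, the inequality reduces to
\begin{equation*}
    \operatorname{Gap}(\widehat{x}) \leq \frac{L_F}{\sqrt{2\sigma}} \cdot \frac{R\, N^{(m+1)/2} + \sum_{k=1}^N k^{(m-1)/2}}{\sum_{k=1}^N k^{m/2}} + \delta.
\end{equation*}

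The two elementary estimates needed are: (i) $\sum_{k=1}^N k^{(m-1)/2} \leq N \cdot N^{(m-1)/2} = N^{(m+1)/2}$, which relies on the monotonicity of $k \mapsto k^{(m-1)/2}$ valid precisely because $m \geq 1$; and (ii) the integral lower bound $\sum_{k=1}^N k^{m/2} \geq \int_0^N x^{m/2}\,dx = \tfrac{2}{m+2}\, N^{(m+2)/2}$. Plugging these in, the numerator is bounded by $(1+R)N^{(m+1)/2}$ and the denominator from below by $\tfrac{2}{m+2} N^{(m+2)/2}$, and dividing yields the factor $\tfrac{(1+R)(m+2)}{2\sqrt{N}}$, which after multiplication by $L_F/\sqrt{2\sigma}$ gives exactly the claimed bound.

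There is no real obstacle here beyond bookkeeping; the only subtlety is the hypothesis $m \geq 1$, which guarantees that $(m-1)/2 \geq 0$ so that step (i) is valid and the quantity $\gamma_k^{-(m-1)}$ in \eqref{main_ineq_mirror_desc} is monotone in $k$. This is precisely why the cases $m = -1$ and $m = 0$ required separate handling in the earlier corollaries (the former producing a suboptimal logarithmic factor), whereas all $m \geq 1$ can be treated uniformly and yield the optimal $O(1/\sqrt{N})$ rate.
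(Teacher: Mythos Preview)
Your proposal is correct and essentially identical to the paper's own proof: the paper also substitutes the step size into \eqref{main_ineq_mirror_desc}, then uses the integral lower bound $\sum_{k=1}^N (\sqrt{k})^m \geq \int_0^N (\sqrt{k})^m\,dk = \tfrac{2}{m+2}(\sqrt{N})^{m+2}$ and the crude upper bound $\sum_{k=1}^N (\sqrt{k})^{m-1} \leq N(\sqrt{N})^{m-1}$, exactly as you describe. Your remark on why $m\geq 1$ is needed for step~(i) is also the right observation.
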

\begin{proof} 
When $\gamma_k = \frac{\sqrt{2 \sigma}}{L_F \sqrt{k}}, \, k = 1, 2, \ldots, N$, and since $\|F(x^k)\|_* \leq L_F$, then by substitution in \eqref{main_ineq_mirror_desc} we find
$$
    \operatorname{Gap}(\widehat{x}) \leq \frac{L_F}{\sqrt{2 \sigma}} \cdot \frac{1}{\sum_{k = 1}^{N} \left(\sqrt{k}\right)^{m} }  \left(R \left(\sqrt{N}\right)^{m+1}  +  \sum_{k = 1}^{N} \left(\sqrt{k}\right)^{m-1} \right) + \delta.
$$

But, for any $m \geq 1$ and $N \geq 1$, 
\begin{equation*}
    \int_{0}^{N}\left(\sqrt{k}\right)^{m} dk \leq \sum_{k = 1}^{N}  \left(\sqrt{k}\right)^{m} \;\;\Longrightarrow\;\; \sum_{k = 1}^{N}  \left(\sqrt{k}\right)^{m} \geq \frac{2\left(\sqrt{N}\right)^{m+2}}{m+2},
\end{equation*}
and
$$
    \sum_{k = 1}^{N} \left(\sqrt{k}\right)^{m-1} \leq N \left(\sqrt{N}\right)^{m-1} = \left(\sqrt{N}\right)^{m+1}, \quad \forall m \geq 1, N\geq 1. 
$$

Therefore,
\begin{align*}
    \operatorname{Gap}(\widehat{x}) & \leq  \frac{L_F}{\sqrt{2 \sigma}} \cdot  \frac{m + 2}{2 \left(\sqrt{N}\right)^{m+2}}  \left(R \left(\sqrt{N}\right)^{m+1} + \left(\sqrt{N}\right)^{m+1}\right) + \delta
    \\& = \frac{L_F (m + 2) (1+ R)}{2 \sqrt{2 \sigma}} \cdot \frac{1}{\sqrt{N}} + \delta = O \left(\frac{1}{\sqrt{N}}\right) + \delta. 
\end{align*}
\end{proof}

\begin{remark}\label{remark_k_gretear_minus1_mirror}
In the comparison with suboptimal convergence rate \eqref{rate_mirror_k_minus1}, when $m \geq 1$, the weighting scheme $\frac{1}{\sum_{k= 1}^{N} \gamma_k^{-m}} \sum_{k = 1}^{N} \gamma_k^{-m} x^k$ assigns smaller weights to the initial points and larger weights to the most recent points that generated by Algorithm \ref{alg_mirror_descent}. This fact will be shown in numerical experiments (see Sect. \ref{sect_numerical}). 
\end{remark}

\section{Mirror-Descent Method for Variational Inequality Problem with Functional Constraints}\label{sectinAlgs}

Consider a set of convex subdifferentiable functionals $g_i: Q \longrightarrow\mathbb{R}$,  $i = 1,2, \ldots, p$. Also we assume that all functionals $ g_i $ are Lipschitz-continuous with some constant $ M_{g_i} >0 $, i.e.,
\begin{equation}\label{Lip_condition_for_gm}
\left|g_i(x)-g_i(y) \right|\leq M_{g_i} \|x-y\|, \quad \forall \; x,y \in Q \;\; \text{and} \;\; i =1, \ldots, p.
\end{equation}
This means that at every point $x \in Q$ and for any $ i = 1, \ldots, p$ there is a subgradient $\nabla g_i(x)$, such that $\|\nabla g_i(x)\|_{*} \leq M_{g_i}$. 

In this section, we consider the following variational inequality problem
\begin{equation}\label{main_func_cons_1}
\begin{aligned}
    \text{Find} \quad x^* \in Q & : \quad  \langle F(x), x^* - x \rangle \leq 0 \quad \forall x \in Q, 
    \\& \text{and} \quad g_i(x) \leq 0, \quad \forall i = 1, 2 \ldots, p, 
\end{aligned}   
\end{equation}
where $F: Q \longrightarrow \textbf{E}^*$ is a continuous, bounded (i.e., \eqref{bound_cond} holds), and $\delta$-monotone operator (i.e., \eqref{d_monot} holds). 

It is clear that instead of a set of Lipschitz-continuous functions  $\{g_i(\cdot)\}_{i=1}^{p}$ we can see one Lipschitz-continuous functional constraint $g: Q \longrightarrow \mathbb{R}$, such that
\begin{equation}\label{functional_of_constraints_g(x)}
    g(x) = \max\limits_{1 \leq i \leq p} \{g_i(x)\}, \;\; \text{and} \;\; |g(x)-g(y)|\leq M_g\|x-y\| \; \quad \forall \; x,y\in Q,
\end{equation}
where $M_g = \max_{1 \leq i \leq p} \{M_{g_i}\}$. Thus, the problem \eqref{main_func_cons_1}, will be equivalent to the following problem
\begin{equation}\label{main_func_cons_2}
    \text{Find} \quad x^* \in Q  : \quad  \langle F(x), x^* - x \rangle \leq 0,  \;\; \text{and} \;\; g(x) \leq 0, \quad \forall x \in Q.  
\end{equation}

\begin{definition}
For some $\varepsilon >0$, we call a point $\widehat{x} \in Q$ an $\varepsilon$-solution of the problem \eqref{main_func_cons_2}, if 
\begin{equation}
    \left\langle F(x), \widehat{x} - x \right \rangle \; \leq \varepsilon \quad \forall x \in Q, \quad \text{and} \quad g(\widehat{x}) \leq \varepsilon. 
\end{equation}
\end{definition}

To solve the problem \eqref{main_func_cons_1} (or its equivalent \eqref{main_func_cons_2}), we propose a mirror-decent type method, listed as Algorithm \ref{alg:MD_func_cons} below. 

As can be seen from the items of Algorithm \ref{alg:MD_func_cons}, the needed point (i.e., the output, see \eqref{eq:output_alg2}) is selected among the points $x^i$ for which $g(x^i) \leq \varepsilon$. Therefore, we will call step $i$ \textit{productive} if $g(x^i) \leq \varepsilon$. If the reverse inequality $g(x^i) > \varepsilon$ holds, then the step $i$ will be called \textit{non-productive}.

Let $I$ and $J$ denote the set of indices of productive and non-productive steps, respectively. $|A|$ denotes the cardinality of the set $A$.  Let us also set $\gamma_k := \gamma_k^F$ if $k \in I$, $\gamma_k := \gamma_k^g$ if $k \in J$.

\begin{algorithm}[H]
\caption{Mirror descent algorithm for VIs with functional constraints.}\label{alg:MD_func_cons}
\begin{algorithmic}[1]
\REQUIRE $\varepsilon>0$, initial point $x^1  \in Q$, step sizes $\{\gamma_k^F\}_{k \geq 1}, \{\gamma_k^g\}_{k \geq 1}$, number of iterations $N$.
\STATE $I \longrightarrow \emptyset, J \longrightarrow \emptyset. $
\FOR{$k= 1, 2, \ldots, N$}
\IF{$g(x^k) \leq \varepsilon$}
\STATE $x^{k+1} = \arg\min_{x \in Q} \left\{ \langle x, F(x^k)  \rangle + \frac{1}{\gamma_k^F} V_{\psi}(x,x^k) \right\} $.
\STATE  $k \longrightarrow I$  \qquad "productive step"
\ELSE
\STATE Calculate $\nabla g(x^k) \in \partial g(x^k)$,
\STATE $x^{k+1} = \arg\min_{x \in Q} \left\{ \langle x, \nabla g(x^k)  \rangle + \frac{1}{\gamma_k^g} V_{\psi}(x,x^k) \right\} $.
\STATE  $k \longrightarrow J$  \qquad "non-productive step"
\ENDIF
\ENDFOR
\end{algorithmic}
\end{algorithm}

For Algorithm \ref{alg:MD_func_cons}, we have the following result. 

\begin{theorem}\label{theo:alg_func_cons}
Let $F: Q \longrightarrow \textbf{E}^*$ be a continuous, bounded, and $\delta$-monotone operator. Let  $g(x) = \max_{1 \leq i \leq p} \{g_i(x)\}$ be an $M_g$-Lipschitz convex function, where $g_i: Q \longrightarrow \mathbb{R},\; \forall i = 1,2, \ldots, p$ are $M_{g_i}$-Lipschitz convex functions, and $M_g = \max_{1 \leq i \leq p} \{M_{g_i}\}$. Then for problem \eqref{main_func_cons_2}, by Algorithm \ref{alg:MD_func_cons}, with a positive non-increasing sequence of step sizes $\{\gamma_k\}_{k \geq 1}$, for any fixed $m \geq -1$,  after $N \geq 1$ iterations, it satisfies the following inequality
\begin{align}\label{main_ineq_MD_func_cons}
    \operatorname{Gap} (\widehat{x}) < \frac{1}{\sum_{k \in I} (\gamma_k^F)^{-m}} & \Bigg( \frac{R}{\gamma_N^{m+1}} + \frac{1}{2 \sigma} \sum_{k\in I} \frac{\|F(x^k)\|_*^2}{(\gamma_k^F)^{m -1}} + \frac{1}{2 \sigma} \sum_{k\in J} \frac{\|\nabla g(x^k)\|_*^2}{(\gamma_k^g)^{m -1}} \nonumber
    \\& \qquad \qquad \qquad \qquad  - (\varepsilon - M_g D)\sum_{k\in J} (\gamma_k^g)^{-m}  \Bigg) + \delta,
\end{align}
where $D > 0$ is the diameter of $Q$, and 
\begin{equation}\label{eq:output_alg2}
    \widehat{x} = \frac{1}{\sum_{k \in I} (\gamma_k^F)^{-m}} \sum_{k \in I}  (\gamma_k^F)^{-m}x^k.
\end{equation}
\end{theorem}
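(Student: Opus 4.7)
The plan is to mimic the argument for Theorem \ref{theo_main_ineq_mirror_desc} but handle productive and non-productive steps separately, then unify them through a single telescoping sum over $k=1,\dots,N$ driven by the composite sequence $\gamma_k$ (equal to $\gamma_k^F$ on $I$ and to $\gamma_k^g$ on $J$).

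For every productive step $k \in I$, I would repeat verbatim the chain of estimates behind \eqref{eq:452df}: apply the optimality condition for $x^{k+1}$, the three-point identity \eqref{eq_three_points}, the Fenchel–Young inequality \eqref{Fenchel_Young_ineq} with $\lambda = \sigma$, and absorb $\frac{\sigma}{2}\|x^k-x^{k+1}\|^2$ into $V_\psi(x^{k+1},x^k)$ via \eqref{eq_breg}, then use $\delta$-monotonicity of $F$. After multiplying by $(\gamma_k^F)^{-m-1}$ this yields, for every $x \in Q$,
\[
(\gamma_k^F)^{-m}\langle F(x), x^k - x\rangle \;\le\; \frac{V_\psi(x,x^k)-V_\psi(x,x^{k+1})}{(\gamma_k^F)^{m+1}} + \frac{\|F(x^k)\|_*^2}{2\sigma(\gamma_k^F)^{m-1}} + \delta(\gamma_k^F)^{-m}.
\]

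For a non-productive step $k \in J$, the same derivation but with $\nabla g(x^k)$ in place of $F(x^k)$ gives
\[
\gamma_k^g\langle \nabla g(x^k), x^k - x\rangle \;\le\; V_\psi(x,x^k) - V_\psi(x,x^{k+1}) + \frac{(\gamma_k^g)^2}{2\sigma}\|\nabla g(x^k)\|_*^2.
\]
Convexity of $g$ gives $\langle \nabla g(x^k), x^k - x\rangle \ge g(x^k) - g(x)$, and since $k \in J$ we have $g(x^k) > \varepsilon$. Assuming problem \eqref{main_func_cons_2} is feasible (so that some $x^\ast \in Q$ satisfies $g(x^\ast)\le 0$), $M_g$-Lipschitzness of $g$ together with $\mathrm{diam}(Q)\le D$ gives $g(x) \le g(x^\ast) + M_g\|x-x^\ast\| \le M_gD$ for every $x \in Q$, and consequently $g(x^k)-g(x) > \varepsilon - M_g D$. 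Dividing by $(\gamma_k^g)^{m+1}$ therefore yields
\[
(\varepsilon - M_g D)(\gamma_k^g)^{-m} \;<\; \frac{V_\psi(x,x^k)-V_\psi(x,x^{k+1})}{(\gamma_k^g)^{m+1}} + \frac{\|\nabla g(x^k)\|_*^2}{2\sigma(\gamma_k^g)^{m-1}}.
\]

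Now I would sum the productive estimates over $I$ and the non-productive estimates over $J$, then add them. Because the $V_\psi$ differences on the right use the common sequence $\gamma_k$ which is non-increasing, the very same telescoping computation performed in the proof of Theorem \ref{theo_main_ineq_mirror_desc} collapses $\sum_{k=1}^{N}\bigl(V_\psi(x,x^k)-V_\psi(x,x^{k+1})\bigr)/\gamma_k^{m+1}$ into the bound $R/\gamma_N^{m+1}$ via \eqref{cond_x1xstar}. Rearranging to move $(\varepsilon-M_gD)\sum_{k\in J}(\gamma_k^g)^{-m}$ to the right-hand side, taking the maximum over $x \in Q$ on the left (which by the definition of $\widehat{x}$ equals $\left(\sum_{k\in I}(\gamma_k^F)^{-m}\right)\operatorname{Gap}(\widehat{x})$), and dividing through by $\sum_{k\in I}(\gamma_k^F)^{-m}$, gives exactly \eqref{main_ineq_MD_func_cons}.

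The main obstacle is the non-productive step: one has to convert a bound involving $\langle \nabla g(x^k),x^k-x\rangle$ into a quantity independent of $x$, and this is where the auxiliary estimate $g(x)\le M_gD$ (silently requiring feasibility of \eqref{main_func_cons_2}) together with $g(x^k)>\varepsilon$ combine to produce the $\varepsilon - M_g D$ factor. A secondary care point is verifying that the telescoping $V_\psi$ sum respects the monotonicity of $\gamma_k$ across both index sets simultaneously; this is exactly why the theorem's hypotheses state the non-increase condition on the unified sequence rather than on the two separate ones.
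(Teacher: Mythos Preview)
Your proposal is correct and follows essentially the same approach as the paper's proof: productive-step estimate via $\delta$-monotonicity, non-productive-step estimate via convexity of $g$, joint telescoping of the Bregman differences through the unified non-increasing sequence $\gamma_k$, and the bound $g(x)-g(x^*)\le M_gD$ to produce the $(\varepsilon-M_gD)$ term. The only cosmetic difference is that the paper obtains $g(x)-g(x^*)\le M_gD$ by convexity plus Cauchy--Schwarz on $\langle\nabla g(x),x-x^*\rangle$, whereas you get it directly from $M_g$-Lipschitzness; both are one-line facts and yield the same inequality.
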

\begin{proof}
Similar to what was done in the proof of Theorem \ref{theo_main_ineq_mirror_desc}, we find that for any $k \in I$ and $x \in Q$ (see \eqref{eq:452df}),  
\[
    \left \langle F(x^k), x^k - x \right\rangle \leq \frac{1}{\gamma_k^F} \left( V_{\psi} (x, x^{k}) - V_{\psi} (x, x^{k+1}) \right) + \frac{\gamma_k^F}{2 \sigma} \| F(x^k)\|_*^2.
\]

By multiplying both sides of the previous inequality with $\frac{1}{(\gamma_k^F)^m}$, and since $F$ is $\delta$-monotone, i.e., 
$$
    \left \langle F(x^k), x^k - x \right\rangle \geq 
    \left \langle F(x), x^k - x \right\rangle - \delta, \quad \forall x \in Q, 
$$
we get (for any $k \in I$ and $x \in Q$)
\begin{align}\label{ineq_productive}
    \frac{1}{(\gamma_k^F)^m}  \left \langle F(x), x^k - x \right\rangle & \leq  \frac{1}{(\gamma_k^F)^{m+1}} \left( V_{\psi} (x, x^{k}) - V_{\psi} (x, x^{k+1}) \right) \nonumber 
    \\& \quad +  \frac{\|F(x^k)\|_*^2}{2 \sigma (\gamma_k^F)^{m-1}}  +  \frac{\delta}{(\gamma_k^F)^{m}}.
\end{align}

Also, for any $k \in J$ and $x \in Q$, we have 
\begin{equation}\label{ineq_nonproductive}
    \frac{g(x^k) - g(x)}{(\gamma_k^g)^m} \leq \frac{1}{(\gamma_k^g)^{m+1}} \left(V_{\psi} (x, x^{k}) - V_{\psi} (x, x^{k+1})\right) + \frac{\| \nabla g(x^k)\|_*^2}{2 \sigma (\gamma_k^g)^{m-1}}. 
\end{equation}

By taking the summation, in each side of \eqref{ineq_productive} and \eqref{ineq_nonproductive}, over productive and non-productive steps, with $\gamma_k = \gamma_k^F$ if $k \in I$ and $\gamma_k = \gamma_k^g$ if $k \in J$, we get the following (for any $x \in Q$) 
\begin{align}\label{eqq0}
    & \quad \; \sum_{k \in I} (\gamma_k^F)^{-m} \langle F(x), x^k - x\rangle + \sum_{k \in J} (\gamma_k^g)^{-m} (g(x^k) - g(x)) \nonumber
    \\& \leq \sum_{k = 1}^{N} \frac{1}{\gamma_k^{m+1}} \left( (V_{\psi} (x, x^{k}) - V_{\psi} (x, x^{k+1}) \right) + \frac{1}{2 \sigma} \sum_{k \in I} \frac{\|F(x^k)\|_*^2}{ (\gamma_k^F)^{m-1}} \nonumber
    \\& \quad + \frac{1}{2 \sigma} \sum_{k \in J} \frac{\|\nabla g(x^k)\|_*^2}{ (\gamma_k^g)^{m-1}}  + \sum_{k \in I} \frac{\delta}{(\gamma_k^F)^{m}}. 
\end{align}
But for any $x \in Q$, we have
\begin{align*}
    & \quad \sum_{k \in I} (\gamma_k^F)^{-m} \left\langle F(x), x^k - x\right\rangle  = \left\langle F(x), \sum_{k \in I} (\gamma_k^F)^{-m} x^k - \sum_{k \in I} (\gamma_k^F)^{-m} x  \right \rangle  \nonumber
    \\& = \left(\sum_{k \in I} (\gamma_k^F)^{-m}\right) \left\langle F(x), \frac{1}{\sum_{k \in I} (\gamma_k^F)^{-m}} \sum_{k \in I} (\gamma_k^F)^{-m} x^k - x \right \rangle \nonumber
    \\& = \left(\sum_{k \in I} (\gamma_k^F)^{-m}\right) \left\langle F(x), \widehat{x} - x \right \rangle.
\end{align*}
Thus,  from the last inequality and \eqref{eqq0}, for any $x \in Q$,  we get 
\begin{align}\label{tredf}
    \left(\sum_{k \in I} (\gamma_k^F)^{-m}\right) \left\langle F(x), \widehat{x} - x \right \rangle & \leq \sum_{k = 1}^{N} \frac{1}{\gamma_k^{m+1}} \left( (V_{\psi} (x, x^{k}) - V_{\psi} (x, x^{k+1}) \right) \nonumber
    \\& \quad + \frac{1}{2 \sigma} \sum_{k \in I} \frac{\|F(x^k)\|_*^2}{ (\gamma_k^F)^{m-1}}  + \frac{1}{2 \sigma} \sum_{k \in J} \frac{\|\nabla g(x^k)\|_*^2}{ (\gamma_k^g)^{m-1}}  \nonumber
    \\& \quad + \delta \sum_{k \in I} (\gamma_k^F)^{-m} -\sum_{k \in J} (\gamma_k^g)^{-m} (g(x^k) - g(x)). 
\end{align}

Since, for any $k \in J$, we have \begin{equation}\label{eq_nonprod00}
g(x^k) - g(x^*) \geq g(x^k) > \varepsilon > 0.
\end{equation}
Then by the convexity of the function $g$, for any $x \in Q$, we have 
\begin{align}\label{fds12}
    & \quad\, - \sum_{k \in J} (\gamma_k^g)^{-m} (g(x^k) - g(x)) \nonumber
    \\& = - \sum_{k \in J} (\gamma_k^g)^{-m} (g(x^k) - g(x^*)) +  \sum_{k \in J} (\gamma_k^g)^{-m} (g(x) - g(x^*)) \nonumber
    \\& < - \varepsilon \sum_{k \in J} (\gamma_k^g)^{-m} + \sum_{k \in J} (\gamma_k^g)^{-m} \left\langle \nabla g(x), x - x^* \right\rangle \nonumber
    \\& \leq - \varepsilon \sum_{k \in J} (\gamma_k^g)^{-m} + M_g D  \sum_{k \in J} (\gamma_k^g)^{-m}, 
\end{align}
where in the last inequality, we used the Cauchy-Schwartz inequality and the fact that $\|\nabla g(x)\|_* \leq M_g, \, \forall x \in Q$, and $Q$ is bounded with a diameter $D > 0$.  

For any $x \in Q$, we have 
\begin{equation}\label{ds5a4d5s}
    \sum_{k = 1}^{N}\frac{1}{\gamma_k^{m+1}} \left( V_{\psi} (x, x^{k}) - V_{\psi} (x, x^{k+1}) \right) \leq \frac{R}{\gamma_{N}^{m+1}}.
\end{equation}

Therefore, by combining \eqref{fds12} and \eqref{ds5a4d5s} with \eqref{tredf}, for any $x \in Q$,  we get the following 
\begin{align*}
    \left(\sum_{k \in I} (\gamma_k^F)^{-m}\right) \left\langle F(x), \widehat{x} - x \right \rangle  
    & < \frac{R}{\gamma_{N}^{m+1}} + \frac{1}{2 \sigma} \sum_{k \in I} \frac{\|F(x^k)\|_*^2}{ (\gamma_k^F)^{m-1}} 
    \\& \quad + \frac{1}{2 \sigma} \sum_{k \in J} \frac{\|\nabla g(x^k)\|_*^2}{ (\gamma_k^g)^{m-1}}  + \delta \sum_{k \in I} (\gamma_k^F)^{-m}
    \\& \quad   - (\varepsilon - M_g D)  \sum_{k \in J} (\gamma_k^g)^{-m}.
\end{align*}

By dividing both sides of the last inequality by $\left(\sum_{k \in I} (\gamma_k^F)^{-m}\right)$, we get the desired inequality \eqref{main_ineq_MD_func_cons}.
\end{proof}

\begin{remark}[Stopping rule for Algorithm \ref{alg:MD_func_cons}]
From Theorem \ref{theo:alg_func_cons}, with 
$\widehat{x} = \frac{1}{\sum_{i\in I} (\gamma_i^F)^{-m}} \sum_{i \in I} (\gamma_i^F)^{-m} x^i$, for any $k \geq 1$, we find 
\begin{align*}
    & \quad \left(\sum_{i \in I} \left(\gamma_i^F\right)^{-m}\right)  \operatorname{Gap}(\widehat{x}) 
    \\& <  \frac{R}{\gamma_k^{m+1}} + \frac{1}{2 \sigma} \sum_{i\in I} \frac{\|F(x^i)\|_*^2}{(\gamma_i^F)^{m -1}} + \frac{1}{2 \sigma} \sum_{j\in J} \frac{\|\nabla g(x^j)\|_*^2}{(\gamma_j^g)^{m -1}}  - (\varepsilon - M_g D)\sum_{i = 1}^{k} (\gamma_i)^{-m}
    \\& \quad + (\varepsilon - M_g D)\sum_{i \in I } (\gamma_i^F)^{-m}+\delta \sum_{i \in I} \left(\gamma_i^F\right)^{-m}
    \\& = \left(\delta + \varepsilon\right) \sum_{i \in I} \left(\gamma_i^F\right)^{-m} - \Bigg( (\varepsilon - M_g D)\sum_{i = 1}^{k} (\gamma_i)^{-m} + M_gD\sum_{i \in I} \left(\gamma_i^F\right)^{-m}
    \\& \qquad \qquad \qquad \qquad \qquad  - \frac{R}{\gamma_k^{m+1}} - \frac{1}{2 \sigma} \sum_{i\in I} \frac{\|F(x^i)\|_*^2}{(\gamma_i^F)^{m -1}} - \frac{1}{2 \sigma} \sum_{j\in J} \frac{\|\nabla g(x^j)\|_*^2}{(\gamma_j^g)^{m -1}}\Bigg).
\end{align*}

From this, without relying on prior knowledge of the number of iterations $N$ that the algorithm performs, we can set for any $k \geq 1$,
\begin{align}\label{stop_criter1_alg2}
    M_gD\sum_{i \in I} \left(\gamma_i^F\right)^{-m}  & \geq \frac{R}{\gamma_k^{m+1}} + \frac{1}{2 \sigma} \sum_{i\in I} \frac{\|F(x^i)\|_*^2}{(\gamma_i^F)^{m -1}} + \frac{1}{2 \sigma} \sum_{j\in J} \frac{\|\nabla g(x^j)\|_*^2}{(\gamma_j^g)^{m -1}} \nonumber
    \\& \quad  +  ( M_g D - \varepsilon)\sum_{i = 1}^{k} (\gamma_i)^{-m}
\end{align}
as a stopping rule of Algorithm \ref{alg:MD_func_cons}. As a result, we conclude 
$$   
    \left(\sum_{i \in I} \left(\gamma_i^F\right)^{-m}\right)  \max_{x \in Q} \left\langle F(x), \widehat{x} - x \right \rangle <   (\delta + \varepsilon) \sum_{i \in I} \left(\gamma_i^F\right)^{-m}.
$$
Thus,  
\[
    \max_{x \in Q} \left\langle F(x), \widehat{x} - x \right \rangle <   \delta + \varepsilon.
\]

Note that for all $i \in I$ it holds that $g(x^i) \leq \varepsilon$, and since $g$ is convex, then we have 
$$
    g(\widehat{x}) \leq \frac{1}{\sum_{i \in I} \left(\gamma_i^F\right)^{-m}} \sum_{i \in I} \left(\gamma_i^f\right)^{-m} g(x^i) \leq \varepsilon.
$$

Thus after the stopping rule \eqref{stop_criter1_alg2} of Algorithm \ref{alg:MD_func_cons} is met we find $\widehat{x}$, which is given in \eqref{eq:output_alg2}, such that  
\[
    \operatorname{Gap}(\widehat{x}) = \max_{x \in Q} \left\langle F(x), \widehat{x} - x \right \rangle <   \delta + \varepsilon, \quad \text{and} \quad g(\widehat{x}) \leq \varepsilon.
\]
\end{remark}

\bigskip 

Now, let us take the following time-varying step size rule
\begin{equation}\label{steps_rulse_alg3}
    \gamma_k = 
    \begin{cases}
    \gamma_k^F : = \frac{\sqrt{2 \sigma}}{L_F \sqrt{k}}, \quad  \text{if} \; k \in I, \\
    \gamma_k^g : = \frac{\sqrt{2 \sigma}}{M_g \sqrt{k}}, \quad  \text{if} \; k \in J,
    \end{cases}
\end{equation}
and first, let us show for Algorithm \ref{alg:MD_func_cons}, with \eqref{steps_rulse_alg3}) that $|I| \ne 0$. For this, let us assume that $|I| = 0$; therefore, $|J| = N$,  i.e., all steps are non-productive.

Let $M: = L_F$ when we have a productive step and $M: = M_g$ when we have a non-productive step. From \eqref{eq_nonprod00} and \eqref{steps_rulse_alg3} we get
\begin{equation}\label{refdg12}
    \sum_{k = 1}^{N} \frac{g(x^k) - g(x^*)}{\gamma_{k}^m} > \sum_{k = 1}^{N} \frac{\varepsilon}{\gamma_{k}^m} = \frac{\varepsilon M^m}{\left(\sqrt{2 \sigma}\right)^{m}} \sum_{k = 1}^{N} \left(\sqrt{k}\right)^m, 
\end{equation}
and for all $k \in J = \{1, \ldots, N\}$,  we get
\begin{align*}
    \sum_{k = 1}^{N} \frac{g(x^k) - g(x^*)}{\gamma_{k}^m}  & \leq \frac{R}{\gamma_N^{m+1}} + \frac{1}{2 \sigma} \sum_{k = 1}^{N} \frac{\|\nabla g(x^k)\|_*^2}{\gamma_k^{m-1}}
    \\& \leq \frac{M^{m+1}}{\left(\sqrt{2 \sigma}\right)^{m+1}} \left(R \left(\sqrt{N}\right)^{m+1} + \sum_{k = 1}^{N} \left(\sqrt{k}\right)^{m-1} \right).
\end{align*}

But, it can be verified (numerically) that for a sufficiently big number of iterations $N$ (dependently on suitable values of the parameters $R > 0, m \geq -1, M> 0, \varepsilon > 0, \sigma > 0 $),  the following inequality holds
\begin{equation}\label{eq_nnnn}
    \frac{M^{m+1}}{\left(\sqrt{2 \sigma}\right)^{m+1}} \left(R \left(\sqrt{N}\right)^{m+1} + \sum_{k = 1}^{N} \left(\sqrt{k}\right)^{m-1} \right) < \frac{\varepsilon M^m}{\left(\sqrt{2 \sigma}\right)^{m}} \sum_{k = 1}^{N} \left(\sqrt{k}\right)^m. 
\end{equation}

Therefore, for a sufficiently big number $N \gg 1$,  we get 
$$
    \sum_{k = 1}^{N} \frac{g(x^k) - g(x^*)}{\gamma_{k}^m}  < \frac{\varepsilon M^m}{\left(\sqrt{2 \sigma}\right)^{m}} \sum_{k = 1}^{N} \left(\sqrt{k}\right)^m.
$$

So, we have a contradiction with \eqref{refdg12}. This means that $|I| \ne 0$. 

\begin{remark}
Note that the reverse inequality of \eqref{eq_nnnn}, i.e., 
$$
    \sum_{k = 1}^{N} \left(\sqrt{k}\right)^m \leq \frac{M}{\varepsilon \sqrt{2 \sigma}} \left(R \left(\sqrt{N}\right)^{m+1} + \sum_{k = 1}^{N} \left(\sqrt{k}\right)^{m-1}\right), 
$$
for any $m \geq -1, M>0, R >0, \sigma > 0$ and $\varepsilon \leq \frac{M}{\sqrt{2 \sigma}}$,   holds for at least $N = 1$. This means that by choosing $\varepsilon \leq \frac{M}{\sqrt{2 \sigma}} \, (\forall M>, \sigma > 0)$, by Algorithm \ref{alg:MD_func_cons} with \eqref{steps_rulse_alg3}, we have at least one productive step for any $m \geq -1$ and $R > 0 $.   
\end{remark}

Now let us analyze the convergence of Algorithm \ref{alg:MD_func_cons}, by taking the time-varying step size rule \eqref{steps_rulse_alg3}.

Let $M: = \max \{L_F, M_g\}$. By using \eqref{steps_rulse_alg3}, and since $\|F (x^k)\|_* \leq L_F \leq M$ and $\|\nabla g(x^k)\|_* \leq M_g \leq M$, then for any $m >0 $, from Theorem \ref{theo:alg_func_cons}, we have
\begin{align*}
    & \quad \; \operatorname{Gap}(\widehat{x})  = \max_{x \in Q} \left\langle F(x), \widehat{x} - x \right \rangle
    \\& < \frac{\left(\sqrt{2\sigma}\right)^{m}}{M^m \sum_{k \in I} \left(\sqrt{k}\right)^{m}} \Bigg(\frac{R M^{m+1} \left(\sqrt{N}\right)^{m+1} }{\left(\sqrt{2\sigma}\right)^{m+1}} 
    \\& \qquad + \frac{1}{2 \sigma} \sum_{k = 1}^{N} \frac{ M^{m+1} \left(\sqrt{k}\right)^{m-1} }{\left(\sqrt{2\sigma}\right)^{m-1}}
    + M D \sum_{k \in J} \frac{M^m \left(\sqrt{k}\right)^{m}}{\left(\sqrt{2 \sigma}\right)^m}  \Bigg) + \delta
    \\& = \frac{M}{\sqrt{2\sigma}} \cdot \frac{1}{\sum_{k \in I} \left(\sqrt{k}\right)^m} \Bigg( R\left(\sqrt{N}\right)^{m+1} + \sum_{k = 1}^{N} \left(\sqrt{k}\right)^{m-1} 
    \\& \qquad \qquad \qquad \qquad \qquad  \qquad \qquad + \sqrt{2 \sigma} D \sum_{k \in J} \left(\sqrt{k}\right)^{m}  \Bigg) + \delta
    \\& \leq \frac{M}{\sqrt{2 \sigma}} \cdot\frac{1}{\sum_{k \in I} \left(\sqrt{k}\right)^m} \cdot \Bigg( R\left(\sqrt{N}\right)^{m+1} + N \left(\sqrt{N}\right)^{m-1} 
    \\& \qquad \qquad \qquad \qquad \qquad  \qquad \qquad + \sqrt{2\sigma} D |J| \left(\sqrt{N}\right)^{m} \Bigg) + \delta
    \\& = \frac{M(1+ R) \left(\sqrt{N}\right)^{m+1} +\sqrt{2\sigma} M D |J| \left(\sqrt{N}\right)^{m} }{\sqrt{2 \sigma} \sum_{k \in I} \left(\sqrt{k}\right)^m} + \delta.
\end{align*}

Now, by setting 
$$
    \frac{M(1+ R) \left(\sqrt{N}\right)^{m+1} +\sqrt{2\sigma} M D |J| \left(\sqrt{N}\right)^{m} }{\sqrt{2 \sigma} \sum_{k \in I} \left(\sqrt{k}\right)^m} \leq \varepsilon,
$$
and since $|I| \leq N$, we get 
\begin{align*}
    & \quad \; \frac{M(1+ R) \left(\sqrt{N}\right)^{m+1} +\sqrt{2\sigma} M D |J| \left(\sqrt{N}\right)^{m} }{\sqrt{2 \sigma} N\left(\sqrt{N}\right)^m}
    \\& \leq \frac{M(1+ R) \left(\sqrt{N}\right)^{m+1} +\sqrt{2\sigma} M D |J| \left(\sqrt{N}\right)^{m} }{\sqrt{2 \sigma} \sum_{k \in I} \left(\sqrt{k}\right)^m} \leq \varepsilon.
\end{align*}
Thus, 
$$
    \frac{M (1 + R)}{\sqrt{2\sigma} \sqrt{N} } + \frac{M D |J|}{N}  \leq \varepsilon. 
$$
  
Hence, we can formulate the following result.
\begin{corollary}
Let $F: Q \longrightarrow \textbf{E}^*$ be a continuous, bounded, and $\delta$-monotone operator. Let  $g(x) = \max_{1 \leq i \leq p} \{g_i(x)\}$ be an $M_g$-Lipschitz convex function, where $g_i: Q \longrightarrow \mathbb{R},\; \forall i = 1,2, \ldots, p$ are $M_{g_i}$-Lipschitz, and $M_g = \max_{1 \leq i \leq p} \{M_{g_i}\}$. Then for problem, after $N \geq 1$ iterations of Algorithm \ref{alg:MD_func_cons}, such that
\begin{equation}\label{iter_bound_alg3}
    \frac{M (1 + R)}{\sqrt{2\sigma} \sqrt{N} } + \frac{M D |J|}{N}  \leq \varepsilon, 
\end{equation}
for any fixed $m > 0$, with step size rules given in \eqref{steps_rulse_alg3}, it satisfies 
$$
    \operatorname{Gap}(\widehat{x})  = \max_{x \in Q} \left\langle F(x), \widehat{x} - x \right \rangle< \varepsilon + \delta, \quad \text{and} \quad g(\widehat{x}) \leq \varepsilon, 
$$
where $\widehat{x} = \frac{1}{\sum_{k \in I} \left(\gamma_k^f\right)^{-m}} \sum\limits_{k \in I} \left(\gamma_k^f\right)^{-m} x^k$.
\end{corollary}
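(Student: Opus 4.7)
The plan is to specialize the general Gap bound of Theorem \ref{theo:alg_func_cons} to the step-size schedule \eqref{steps_rulse_alg3}, match the resulting expression to condition \eqref{iter_bound_alg3}, and establish feasibility separately by Jensen's inequality. The first step is to substitute $\gamma_k^F=\sqrt{2\sigma}/(L_F\sqrt{k})$ and $\gamma_k^g=\sqrt{2\sigma}/(M_g\sqrt{k})$ into \eqref{main_ineq_MD_func_cons}, control $\|F(x^k)\|_*$ and $\|\nabla g(x^k)\|_*$ uniformly by $M=\max\{L_F,M_g\}$, and use the elementary estimates $\sum_{k=1}^{N}(\sqrt{k})^{m-1}\leq(\sqrt{N})^{m+1}$ and $\sum_{k\in J}(\sqrt{k})^m\leq|J|(\sqrt{N})^m$ on the numerator sums. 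Collecting common factors of $M$, $\sqrt{2\sigma}$ and $\sqrt{k}$, this produces the pre-corollary inequality
$$
\operatorname{Gap}(\widehat{x})\leq\frac{M(1+R)(\sqrt{N})^{m+1}+\sqrt{2\sigma}\,MD|J|(\sqrt{N})^m}{\sqrt{2\sigma}\sum_{k\in I}(\sqrt{k})^m}+\delta.
$$

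Second, I would note the algebraic identity
$$
\frac{M(1+R)(\sqrt{N})^{m+1}+\sqrt{2\sigma}\,MD|J|(\sqrt{N})^m}{\sqrt{2\sigma}\,N(\sqrt{N})^m}=\frac{M(1+R)}{\sqrt{2\sigma}\sqrt{N}}+\frac{MD|J|}{N},
$$
which exhibits the left-hand side of \eqref{iter_bound_alg3} as the fraction obtained from the Gap estimate when $\sum_{k\in I}(\sqrt{k})^m$ is replaced by $N(\sqrt{N})^m$. Using $\sum_{k\in I}(\sqrt{k})^m\leq|I|(\sqrt{N})^m\leq N(\sqrt{N})^m$, since every summand is $\leq(\sqrt{N})^m$ and $|I|\leq N$, together with the hypothesis \eqref{iter_bound_alg3}, and tracing the chain of inequalities set out immediately before the corollary, yields $\operatorname{Gap}(\widehat{x})\leq\varepsilon+\delta$.

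Third, for the feasibility estimate $g(\widehat{x})\leq\varepsilon$, I would observe that $\widehat{x}=\sum_{k\in I}\lambda_k x^k$ is a convex combination of productive iterates with positive weights $\lambda_k=(\gamma_k^F)^{-m}/\sum_{i\in I}(\gamma_i^F)^{-m}$ summing to one, and each $x^k$ with $k\in I$ satisfies $g(x^k)\leq\varepsilon$ by the productive-step rule of Algorithm \ref{alg:MD_func_cons}. Convexity of $g=\max_{1\leq i\leq p}g_i$ and Jensen's inequality then give $g(\widehat{x})\leq\sum_{k\in I}\lambda_k g(x^k)\leq\varepsilon$.

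The main obstacle lies in the denominator estimate of the second step: the uniform bound $\sum_{k\in I}(\sqrt{k})^m\leq N(\sqrt{N})^m$ shrinks the denominator and thereby enlarges the Gap fraction, so this substitution is not straightforwardly sign-preserving for an upper bound. One must read \eqref{iter_bound_alg3} as the exact algebraic simplification of the pre-corollary bound under the denominator normalization $N(\sqrt{N})^m$ and then check that the chain $\tfrac{X}{\sqrt{2\sigma}\,N(\sqrt{N})^m}\leq\tfrac{X}{\sqrt{2\sigma}\sum_{k\in I}(\sqrt{k})^m}\leq\varepsilon$ displayed in the paragraph preceding the corollary genuinely closes the argument in the required direction. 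This subtlety is the only nontrivial analytic point; everything else is routine substitution, elementary summation estimates, and convexity.
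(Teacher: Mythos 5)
Your proposal follows the paper's own route step for step: substitute the step sizes \eqref{steps_rulse_alg3} into \eqref{main_ineq_MD_func_cons}, bound the operator and subgradient norms by $M=\max\{L_F,M_g\}$, apply the elementary summation estimates to reach the pre-corollary fraction, and handle feasibility of $\widehat{x}$ by convexity of $g$ over the productive iterates. The Jensen/feasibility part and the derivation of the intermediate Gap bound are correct and match the paper exactly.

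The subtlety you flag at the end, however, is not a cosmetic one: the chain does \emph{not} close in the required direction, either in your writeup or in the paper's. The Gap is controlled by
\begin{equation*}
\frac{M(1+R)(\sqrt{N})^{m+1}+\sqrt{2\sigma}\,MD|J|(\sqrt{N})^{m}}{\sqrt{2\sigma}\sum_{k\in I}(\sqrt{k})^{m}},
\end{equation*}
and since $\sum_{k\in I}(\sqrt{k})^{m}\leq N(\sqrt{N})^{m}$, the quantity appearing in \eqref{iter_bound_alg3} is the \emph{smaller} of the two fractions. Assuming \eqref{iter_bound_alg3} therefore bounds a lower estimate of the Gap bound by $\varepsilon$ and says nothing about the Gap bound itself; as written, \eqref{iter_bound_alg3} is a necessary consequence of $\operatorname{Gap}(\widehat{x})-\delta\leq\varepsilon$, not a sufficient condition for it. To genuinely close the argument one needs a \emph{lower} bound on $\sum_{k\in I}(\sqrt{k})^{m}$ — for instance $\sum_{k\in I}(\sqrt{k})^{m}\geq\sum_{k=1}^{|I|}(\sqrt{k})^{m}\geq\tfrac{2}{m+2}(\sqrt{|I|})^{m+2}$ — which replaces $N$ by $|I|$ in the denominator of the resulting condition and changes the statement of the corollary accordingly. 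Neither your proposal nor the paper supplies such a bound, so this step is a genuine gap that you correctly identified but did not repair.
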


\bigskip 

Now, by setting $m = 0$ in \eqref{main_ineq_MD_func_cons}, with $\overline{x} = \frac{1}{|I|} \sum_{k \in I} x^k$, we get
\begin{align*}
    & \quad \; \operatorname{Gap}(\overline{x})  = \max_{x \in Q} \left\langle F(x), \overline{x} - x \right \rangle < 
    \\& < \frac{1}{|I|} \left(\frac{R}{\gamma_N} + \sum_{k \in I} \frac{\| F(x^k) \|_*^2 }{2 \sigma} \gamma_k^F + \sum_{k \in J} \frac{\|\nabla g(x^k)\|_*^2 }{2 \sigma} \gamma_k^g    - (\varepsilon - M_g D) |J| \right) + \delta
    \\&  \leq \frac{1}{|I|} \left( \frac{M R \sqrt{N}}{\sqrt{2 \sigma}} + \frac{M}{\sqrt{2 \sigma}} \sum_{k \in I} \frac{1}{\sqrt{k}} +\frac{M}{\sqrt{2 \sigma}} \sum_{k \in J} \frac{1}{\sqrt{k}} + MD |J| \right) + \delta
    \\& = \frac{M}{|I| \sqrt{2\sigma}} \left(R  \sqrt{N} + \sum_{k = 1}^{N} \frac{1}{\sqrt{k}}\right) + \frac{MD|J|}{|I|} + \delta
    \\& \leq \frac{M\sqrt{N}}{|I| \sqrt{2\sigma}} \left( 2 + R\right) + \frac{MD|J|}{|I|} + \delta.
\end{align*}

Thus, by setting $\frac{M\sqrt{N}}{|I| \sqrt{2\sigma}} \left( 2 + R\right) + \frac{MD|J|}{|I|}  \leq \varepsilon$ and since $|I| \leq N$, we get 
$$
    \frac{M\left( 2 + R\right)}{\sqrt{2\sigma} \sqrt{N} }  + \frac{MD|J|}{N} \leq \varepsilon.
$$

Hence, for $m=0$,  we can formulate the following result.

\begin{corollary}
Let $F: Q \longrightarrow \textbf{E}^*$ be a continuous, bounded, and $\delta$-monotone operator. Let  $g(x) = \max_{1 \leq i \leq p} \{g_i(x)\}$ be an $M_g$-Lipschitz convex function, where $g_i: Q \longrightarrow \mathbb{R},\; \forall i = 1,2, \ldots, p$ are $M_{g_i}$-Lipschitz, and $M_g = \max_{1 \leq i \leq p} \{M_{g_i}\}$. Then, after $N \geq 1$ iterations of Algorithm \ref{alg:MD_func_cons}, such that
$$
    \frac{M\left(2 + R\right)}{\sqrt{2\sigma} \sqrt{N} }  + \frac{MD|J|}{N} \leq \varepsilon,
$$
with $m = 0$ and  step size rules given in \eqref{steps_rulse_alg3}, it satisfies 
$$
    \operatorname{Gap}(\overline{x})  = \max_{x \in Q} \left\langle F(x), \overline{x} - x \right \rangle< \varepsilon + \delta, \quad \text{and} \quad g(\overline{x}) \leq \varepsilon, 
$$
where $\overline{x} = \frac{1}{|I|} \sum_{k \in I} x^k$.
\end{corollary}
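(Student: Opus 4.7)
The plan is to derive this result as a direct specialization of Theorem \ref{theo:alg_func_cons} with $m=0$, essentially formalizing the displayed chain of inequalities that immediately precedes the corollary statement. First I would instantiate \eqref{main_ineq_MD_func_cons} at $m=0$, which eliminates the $(\gamma_k^F)^{-m}$ weights and makes $\widehat{x}$ reduce to the simple average $\overline{x} = \frac{1}{|I|}\sum_{k \in I} x^k$, since $\sum_{k \in I}(\gamma_k^F)^{-0} = |I|$. The leading Bregman term becomes $R/\gamma_N$, and the remaining sums involve $\gamma_k^F = \frac{\sqrt{2\sigma}}{L_F\sqrt{k}}$ on productive steps and $\gamma_k^g = \frac{\sqrt{2\sigma}}{M_g\sqrt{k}}$ on non-productive steps.

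Next I would apply the uniform bounds $\|F(x^k)\|_* \le L_F \le M$ and $\|\nabla g(x^k)\|_* \le M_g \le M$, so that each squared-norm cancels against the $\gamma_k$ in the numerator and leaves a factor $M/\sqrt{2\sigma}$ times $1/\sqrt{k}$. Combining the productive and non-productive error sums into a single sum over $k=1,\dots,N$ and using the elementary estimate $\sum_{k=1}^{N} 1/\sqrt{k} \le 2\sqrt{N}$ yields
$$
\operatorname{Gap}(\overline{x}) \le \frac{M\sqrt{N}(2+R)}{|I|\sqrt{2\sigma}} + \frac{MD\,|J|}{|I|} + \delta.
$$
Using $|I| \le N$ in both denominators (replacing $|I|$ by $N$ enlarges each fraction), this is bounded by $\frac{M(2+R)}{\sqrt{2\sigma}\sqrt{N}} + \frac{MD\,|J|}{N} + \delta$, and the hypothesis of the corollary controls the first two terms by $\varepsilon$, giving $\operatorname{Gap}(\overline{x}) < \varepsilon + \delta$.

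For the feasibility claim $g(\overline{x}) \le \varepsilon$, I would invoke convexity of $g$: since every productive index $k \in I$ satisfies $g(x^k) \le \varepsilon$ by definition, Jensen's inequality applied to the uniform average $\overline{x} = \frac{1}{|I|} \sum_{k\in I} x^k$ gives $g(\overline{x}) \le \frac{1}{|I|}\sum_{k \in I} g(x^k) \le \varepsilon$. The only subtle point here is that this argument presumes $|I| \neq 0$, which is guaranteed by the discussion immediately preceding the step-size rule \eqref{steps_rulse_alg3} (and its accompanying remark on the admissible range of $\varepsilon$).

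I do not expect any serious obstacle, since the bulk of the computation is already carried out in the display preceding the corollary; my proof would simply reorganize those lines into a clean argument and separately invoke Jensen's inequality for the constraint bound. The only place requiring mild care is ensuring the bound $|I|\le N$ is applied consistently in both error terms so that the hypothesis \eqref{iter_bound_alg3} (with constant $2+R$ in place of $1+R$) yields the final conclusion.
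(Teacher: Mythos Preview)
Your approach mirrors the paper's argument exactly: the displayed chain of inequalities immediately preceding the corollary does precisely what you describe (instantiate Theorem~\ref{theo:alg_func_cons} at $m=0$, substitute the step sizes \eqref{steps_rulse_alg3}, bound the norms by $M$, use $\sum_{k=1}^N 1/\sqrt{k}\le 2\sqrt{N}$, then pass from $|I|$ to $N$), and the feasibility bound $g(\overline{x})\le\varepsilon$ is obtained by convexity as you indicate.

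One explicit claim in your write-up is wrong, however. You assert that ``replacing $|I|$ by $N$ enlarges each fraction''; since $|I|\le N$ and $|I|$ sits in the \emph{denominator}, the replacement actually \emph{shrinks} each fraction. Thus from
\[
\operatorname{Gap}(\overline{x}) \;\le\; \frac{M\sqrt{N}(2+R)}{|I|\sqrt{2\sigma}} + \frac{MD\,|J|}{|I|} + \delta
\]
you cannot upper-bound the right-hand side by the expression with $N$ in place of $|I|$. The paper's derivation carries the same inequality in the same direction (it deduces the condition with $N$ \emph{from} the condition with $|I|$, which is the valid direction), so your plan does faithfully reproduce the paper's argument; just be aware that the parenthetical justification you give is inverted, and the passage from the $|I|$-bound to the hypothesis of the corollary is not an upper-bounding step in the sense you describe.
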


\begin{remark}
By setting $m = -1$ in \eqref{main_ineq_MD_func_cons}, with $\widetilde{x} = \frac{1}{\sum\limits_{k \in I} \frac{1}{\sqrt{k}}} \sum\limits_{k \in I}\frac{1}{\sqrt{k}} x^k$, we have
\begin{align*}
    \operatorname{Gap}(\widetilde{x})&<  \frac{M}{\sqrt{2 \sigma}} \cdot \frac{1}{\sum_{k \in I} \sqrt{k}} \cdot \left( R + \sum_{k = 1}^{N} \frac{1}{k} + D \sqrt{2\sigma} \sum_{k \in J} \frac{1}{\sqrt{k}}\right)  + \delta
    \\& \leq\frac{M}{\sqrt{2 \sigma}} \cdot \frac{1}{2 \sqrt{|I| + 1 } - 2}  \left( 1+R + \log (N) +D \sqrt{2\sigma} \sum_{k \in J} \frac{1}{\sqrt{k}} \right) + \delta
    \\& \leq \frac{M  }{ \sqrt{\sigma} \sqrt{|I|}} \left(1 +R +  \log (N) +D \sqrt{2\sigma} \sum_{k \in J} \frac{1}{\sqrt{k}} \right) + \delta. 
\end{align*}

Thus, after $N \geq 1$ iterations of Algorithm \ref{alg:MD_func_cons} (with $m = -1$), such that
$$
    \frac{M  }{ \sqrt{\sigma} \sqrt{|I|}} \left( 1+ R + \log (N) +D \sqrt{2\sigma} \sum_{k \in J} \frac{1}{\sqrt{k}} \right) \leq \varepsilon, 
$$
it satisfies,
$$
    \operatorname{Gap}(\widetilde{x})  = \max_{x \in Q} \left\langle F(x), \overline{x} - x \right \rangle< \varepsilon + \delta, \quad \text{and} \quad g(\widetilde{x}) \leq \varepsilon.
$$
\end{remark}

\section{Numerical experiments}\label{sect_numerical}
To show the advantages and effects of the considered weighting scheme for generated points by Algorithm \ref{alg_mirror_descent} (see Theorem \ref{theo_main_ineq_mirror_desc}) in its convergence, a series of numerical experiments were performed for some examples of the classical variational inequality problem. We compare the performance of Algorithm \ref{alg_mirror_descent} with the Modified Projection Method (MPM) proposed in  \cite{Khanh2014Modified}. In our experiments, we take the standard Euclidean prox-structure, namely $\psi(x) = \frac{1}{2} \|x\|_2^2$ which is $1$-strongly functions (i.e., $\sigma = 1$) and the corresponding Bregman divergence is $V_{\psi}(x, y) = \frac{1}{2} \|x - y\|_2^2$. In all experiments, we take the set $Q$ as a unit ball in $\mathbb{R}^n$ with the center at $\textbf{0} \in \mathbb{R}^n$. The compared methods start from the same initial point $x^1 = \left(\frac{1}{\sqrt{n}}, \ldots, \frac{1}{\sqrt{n}}\right) \in \mathbb{R}^n$. The results of the comparisons for the considered examples are presented in Figs. \ref{fig_ex1_ex2} and \ref{fig_ex4}. These results show the values $\|F_k / F(x^1)\|_2^2$, where  $F_k : = F(\widehat{x}_k)$, and $\widehat{x}_k = \frac{1}{\sum_{i = 1}^{k} \gamma_i^{-m}} \sum_{i = 1}^{k} \gamma_i^{-m} x^i$.

\begin{example}\label{example_1} (\cite{Dong2018Inertial})
Let $F: \mathbb{R}^2 \longrightarrow \mathbb{R}^2$ be a monotone and bounded operator in the unit ball, defined as follows
\begin{equation}
    F(x_1, x_2) = \left(2x_1 + 2x_2 + \sin(x_1), -2x_1 + 2 x_2 + \sin(x_2)\right).  
\end{equation}
\end{example}

\begin{example}\label{example_3} (\cite{Sahu2021Inertial})
Let $F: \mathbb{R}^3 \longrightarrow \mathbb{R}^3$ be a monotone and bounded operator in the unit ball, defined as follows
\begin{equation}
    F(x_1, x_2, x_3) = \left(F_1(x_1, x_2, x_3), F_2(x_1, x_2, x_3), F_3(x_1, x_2, x_3)\right), 
\end{equation}
where $r, s , t \in \mathbb{R}$ and $F_1(x_1, x_2, x_3) = x_1 - s x_2 +t x_3 + \sin(x_1), F_2(x_1, x_2, x_3) = x_2 - r x_3 +s x_1 + \sin(x_2), F_3(x_1, x_2, x_3) = x_3 - t x_1 +r x_2 + \sin(x_3).$
\end{example}

The results for Examples \ref{example_1} and \ref{example_3}, presented in Fig. \ref{fig_ex1_ex2}. From this figure, we can see that MPM works better than Algorithm \ref{alg_mirror_descent} only for small values of the parameter $m$. But Algorithm \ref{alg_mirror_descent} works better than MPM, with a big difference between their performance when we increase the value of the parameter $m$.  

\begin{figure}[htp]
\minipage{0.50\textwidth}
\includegraphics[width=\linewidth]{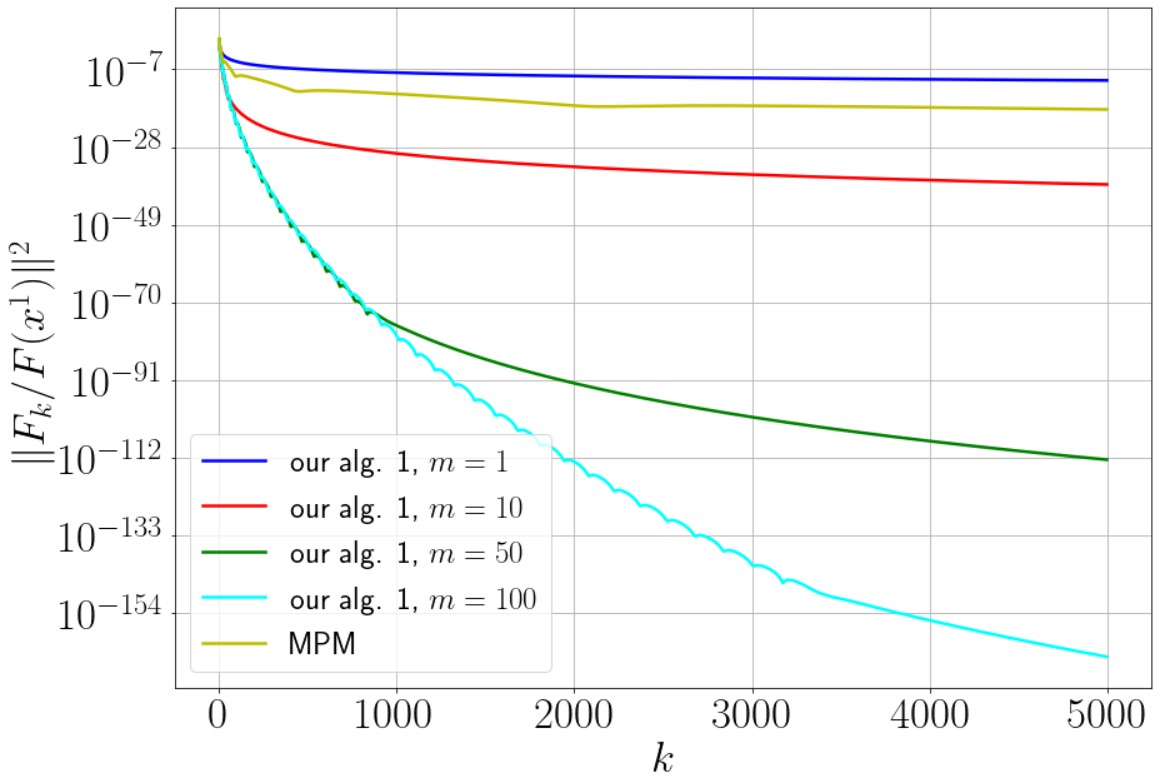}
\endminipage
\minipage{0.50\textwidth}
\includegraphics[width=\linewidth]{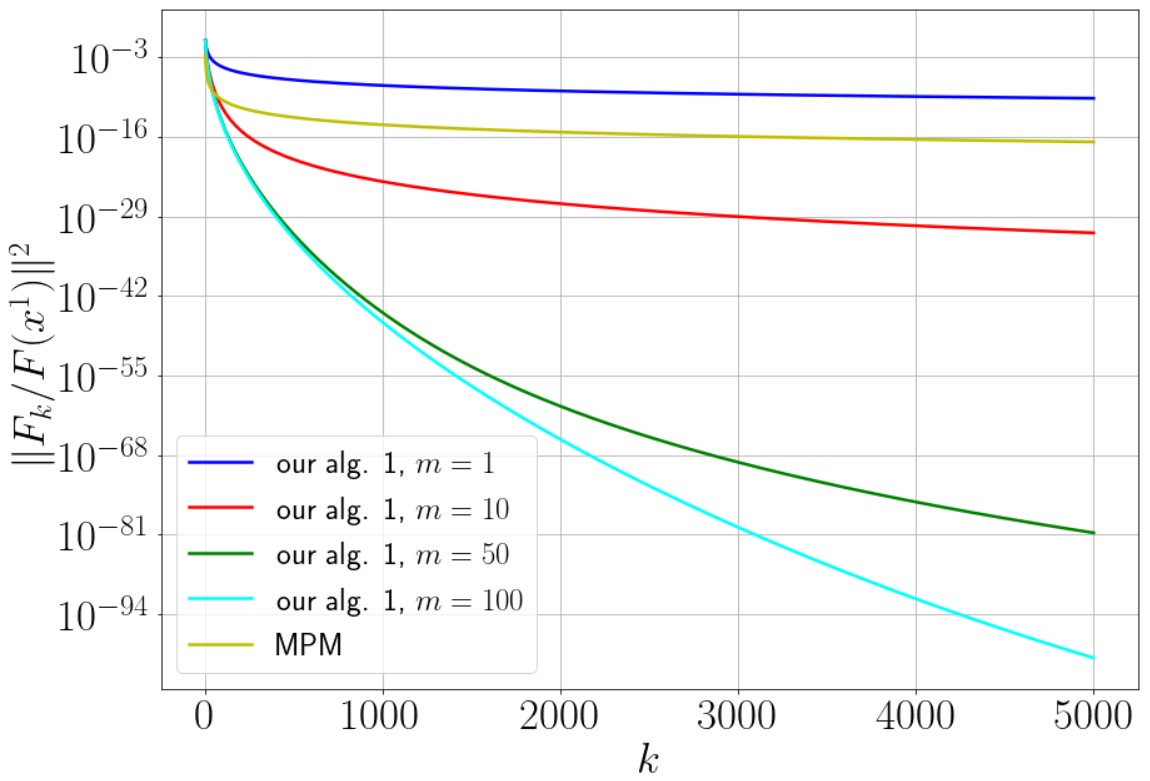}
\endminipage
\caption{Results of  Algorithm \ref{alg_mirror_descent} and Modified Projection Method \cite{Khanh2014Modified}, for Example \ref{example_1} (in the left) and for Example \ref{example_3} (in the right). }
\label{fig_ex1_ex2}
\end{figure} 

\begin{example}\label{example_4}
In this example, we consider the HpHard (or Harker-Pang) problem \cite{Qu2024extra}. Let $F: \mathbb{R}^n \longrightarrow \mathbb{R}^n$ be an operator defined by
\begin{equation}
    F(x) = K x + q, \quad K = A A^{\top } + B + C,  q \in \mathbb{R}^n,
\end{equation}
where $A \in \mathbb{R}^{n \times n}$ is a matrix, $B \in \mathbb{R}^{n \times n}$ is a skew-symmetric matrix ($A$ and $B$ are randomly generated from  a normal (Gaussian) distribution with mean equals $0$ and scale equals $0.01$) and $C \in \mathbb{R}^{n \times n}$ is a diagonal matrix with non-negative diagonal entries (randomly generated from the continuous uniform distribution over the interval $[0, 1)$). Therefore, it follows that $K$ is positive semidefinite. The operator $F$ is monotone and bounded in the unit ball with constant $L_F = \|K\|_2 + \|q\|_2$. For $q = \textbf{0} \in \mathbb{R}^n$, the solution of problem \eqref{main_constrained_prob}, is $x^* = \textbf{0} \in \mathbb{R}^n$. 
\end{example}

The results for Example \ref{example_4}, presented in Fig. \ref{fig_ex4}. From this figure we can see that Algorithm \ref{alg_mirror_descent} always works better than MPM for any $m \geq 1$. 

\begin{figure}[htp]
\minipage{0.95\textwidth}
\includegraphics[width=\linewidth]{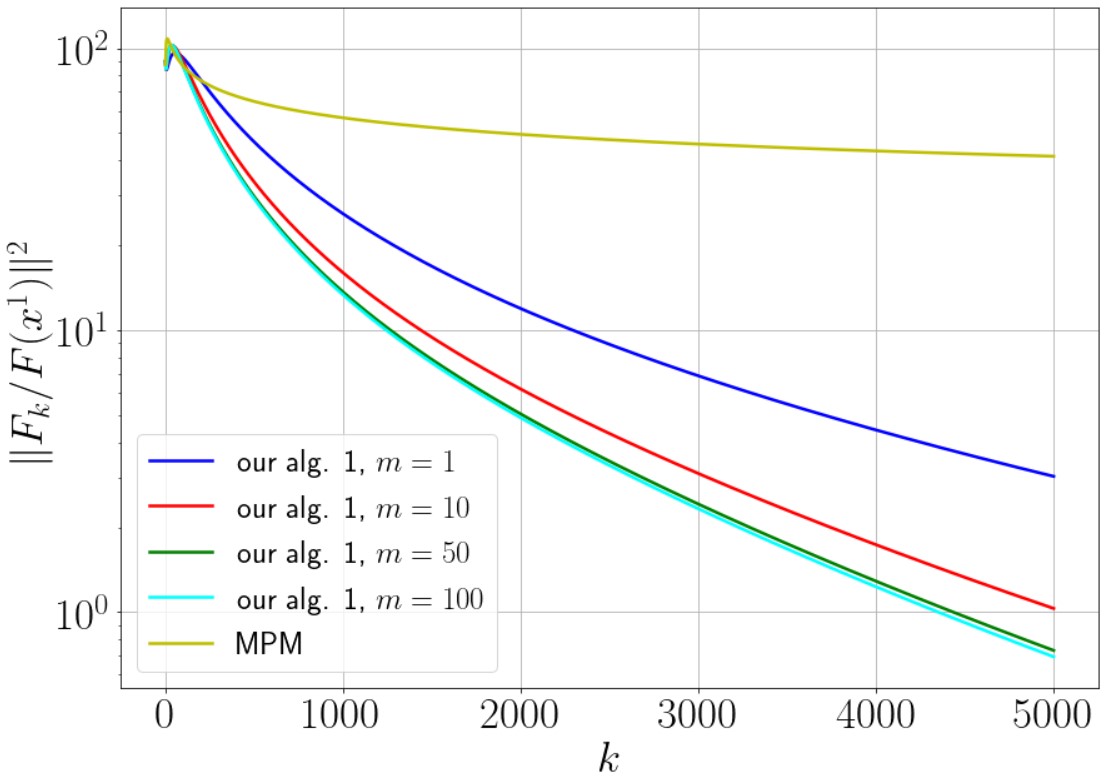}
\endminipage
\caption{Results of  Algorithm \ref{alg_mirror_descent} and Modified Projection Method \cite{Khanh2014Modified}, for Example \ref{example_4} with $n=100$. }
\label{fig_ex4}
\end{figure}

\section{Conclusion}\label{sec_conclusions}
In this paper, we studied two classes of variational inequality problems. The first is classical constrained (i.e., without functional constraints) variational inequality and the second is the same problem with functional constrained (inequality type constraints). To solve such problems, we proposed mirror descent-type methods with a weighting scheme for the generated points in each iteration of the algorithms. For the second class of problems, we proposed a mirror descent method by switching between productive and non-productive steps. We analyzed the proposed methods for the time-varying step sizes and proved the optimal convergence rate of the proposed algorithm concerning the classical variational inequality problems with bounded and $\delta$-monotone operators. We conducted some numerical experiments, which illustrate the advantages of the presented weighting scheme for some examples of the classical variational inequality problem, with a comparison with the modified projection method. As a future work, there are many directions connecting with the problems under consideration, such as the results for the Lipschitz monotone and strongly monotone operators, as well as for the stochastic setting of the problem.    

\end{document}